\newcounter{nuexamples}
\newcounter{lastexample}
\newwrite\lastexampfile\relax
\def\tcbpar{\setlength{\parindent}{1.35em}%
\setlength{\parskip}{.1ex plus 0.75ex minus 0.25ex}}
\newtheorem{theorem}{Theorem}[subsection]%
\newtheorem{lemma}{Lemma}[subsection]%
\theoremstyle{definition} 
\newtheorem{example}{Example}[subsection]%
\begin{document}

\title{From Thomas Bayes to Big Data: \\On the feasibility of being a subjective Bayesian\footnote{Supported in part by NSF Grant DMS-2113364.}}
\author[1]{Ya\hspace{-.1em}'\hspace{-.1em}acov Ritov}

\affil[1]{University of Michigan, Ann Arbor}

\maketitle

\begin{abstract}{
We argue that the Bayesian paradigm, of a prior which represents the beliefs of the statistician before observing the data, is not feasible in ultra-high dimensional models. We claim that natural priors that represent the a priori beliefs  fail in unpredictable ways under values of the parameters that cannot be honestly ignored. We do not claim that the frequentist estimators we present  cannot be mimicked by Bayesian procedures, but that these Bayesian procedures do not represent beliefs. They were created with the frequentist analysis in mind, and in most cases, they cannot represent a consistent set of   beliefs about the parameters (for example, since they depend on the loss function, the particular functional of interest,  and not only on the a priori knowledge, different priors should be used for different  analyses of the same data set). In a way, these are frequentist procedures using Bayesian technique. 

The paper presents different examples where the subjective point of view fails. It is  argued that   the arguments based on Wald's and Savage's seminal works are not relevant to the validity of the subjective Bayesian paradigm.  The discussion tries to deal with the fundamentals, but the argument is based on a firm mathematical proofs. 
}\end{abstract}


\section{Introduction and summary}

It was used to be that the Bayesian analysis was restricted to simple models. Computational considerations were an issue, and practical Bayesian analysis was limited to the simple case of exponential families with conjugate models. Thus Savage wrote: `No attempt is made to include criteria like intellectual simplicity or
facility of computation that depend not only on the estimate but also
on the capabilities of the people who contemplate using it.'' (\cite{Savage1972}, page 221) He continues: ``no one really considers
it a virtue in itself for an estimate to be a maximum-likelihood
estimate \ldots; rather, it is believed that such estimates do
typically have real virtues.''  In this note we try to apply these criteria to the Bayesian analysis itself. 

Thus, we try to consider the Bayesian approach not as a methodology to yield simple solutions, almost off-the-shelf, for complex problems. Solutions that  can be approximated in practice. It is lowering the net, since other solutions can be approximated as well, and it does not make sense to save on the final computer time for the analysis of a data which cost millions to collect. We, of course, acknowledge that Bayesian solutions may be good. One may consider as a Bayesian estimator any procedure which is the minimizer of the a posteriori mean loss for some artificial a priori distribution, or even to the maximizer of the a posteriori density. We will, however, restrict the adjective `Bayesian' to those who follow a Bayesian philosophy.

\cite{berger2000bayesian} describes the subjective Bayesian analysis as the ``soul'' of Bayesian statistics, which probably means that it is not the mind, and certainly not the reality. We admit that. However, it is hard to justify ``philosphically'' a technique. There is no ideology in M-estimation or generalized methods of moments. They do not becomes an adjective like ``Bayesian.'' Nobody refers to himself as an M-estimatian. Frequentist techniques can be judged on how close to  the  best (in any sense) solution to the problem they are. They are not the goal by itself. The division of the community is to Bayesians vs. non-Bayesians. There is no parallel division to method of moments people to non-method of moments guys.  Some may argue that, sometimes, the conclusions are not sensitive to the subjective probability, and therefore we can bypass the step of having a real subjective prior, and the philosophy stands with using practical priors. This would not be the case in the examples we are going to present. The results would heavily depend on the prior, and it would not be possible to have a good, approxiately subjective prior.

\cite{Lindley2000} outlined the Bayesian philosophy. For him uncertainty is always formalized as a probability distribution. A statistical problem has two uncertain components. Almost all statistical schools agree on the uncertainty, or randomness, of the observations given the parameters. However, statistical analysis is needed exactly since there is uncertainty about the values of the model parameters. The researcher start with his own world view or, at least, a small world view, \cite{Savage1972}. As any other uncertainty, this is also described by the so call a priori distribution.  The proper statistical analysis combines these two using the inverse probability formula, AKA, Bayes Theorem. \cite{Bernardo2011} claims that this approach succeeds in what any other approach to statistics fails, namely to give a solid axiomatic foundations that guarantees a methodological consistency, no less than ``a scientific revolution in Kuhn’s sense.''

Except for the philosophical assertions there are the mathematical arguments. They are based on two results, attributed to two giants, Wald and Savage. As Barnard wrote in his comment on \cite{Lindley1953}:  ``What Wald did from the practical point of view was to show that even if we disbelieve
in the existence of a prior distribution, we should none the less behave as if it did exist.'' Wald argued that any admissible statistical procedure is Bayesian (or, at least, a limit of Bayesian procedures). Savage argued that any utility maximizer behaves as if he has prior over the states of nature. We do not dispute these mathematical arguments (which does not mean we accept Savage's axioms). However, we will argue that they cannot justify or even motivate the positions of Lindley or Bernardo. 

The mid-twenty century  Bayesian approach described relatively simple models with only a few unknown parameters. We are now in the age of big data and deep networks. A simple current statistical model involves many unknown parameters. Regularly, there are much more unknown parameters than observations. This has a few major implications to the feasibility and success of the Bayesian approach. To begin with, it is difficult if not utterly impossible (in theory!) to have a world view about a set of thousand parameters. Suppose we, somehow, arrive to a prior which describes properly the unknown certainty of the a compound vector of parameters. If there is a consistent estimator then a Bayesian procedure is consistent under its prior, \cite{RBGK}. This is a strong statement for a procedure which is continuous in low dimension. It is a very weak statement in an ultra-high dimension world, where a set with high probability under one prior has almost zero measure by a slightly different prior. Naive assumptions about the proper world-view may bring a very misleading Bayesian procedure. Consider a simple case. Let $\Th_0\subset\R$ be a compact set. We can come with a reasonable prior describing our uncertainty on a parameter on $\Th_0$. Suppose now that we consider $\Th_0^p$, i.e., we consider a vector of $p$ parameters: $\th_i\in\Th_0$, $i=1,\dots,p$. The $\th_i$s are, to the best of our knowledge, similar and unrelated. Typically, our prior considers them independent, or at least, exchangeable. But this have a strong implications. For example, the means of any large subset of them,  $\th_{i_1},\dots,\th_{i_q}$, $i_1<\dots<i_q\le p$ are approximately equal to the expected value of each one of them (by the prior!) The Bayesian procedures are consistent with high probability under the prior, but how would they behave when the  sequence is, de facto, such that its first half has a completely different mean that its second one? This may have a meaningful effect on some Bayesian decisions. Are we really committed to these implications?

There was the replication crisis in science, and, in particular, in social psychology. One of the suggested solutions was adopting the Bayesian methodolgoy, \cite{Wasserstein29032019} and \cite{Ruberg29032019}. This was surprising. If the problem is that scientists ignore the effect of flexible stopping times (e.g., continue to collect data in an effort to stop with a P value of 5\%), the solution cannot be adopt a procedure that intentionally ignore stopping time. If ignoring the effect of too many hypotheses is a problem, then the solution cannot be a philosophy of science under which this is permissible. Finally, if you complain about lack of protocols and pre-registration, do not revoke an a posteriori analysis.   

We cannot call a Bayesian anyone who use Bayes Theorem. In fact, Bayes himself states and proves this theorem as part of his introduction, which in modern terminology, he adds for completeness since it cannot be found in this way in other sources. See more in Technical Discussion \ref{Technical:BayesAndPrice}. We reserve the term Bayesian  to those who use Bayes Theorem as the corner stone of their statistical methodology.   We distinguish between three types of Bayesians:
\begin{enumerate}
  \item The honest/pardigmatic/subjective Bayesian (S-Bayesian). The one who follows the paradigm, and his prior represents his beliefs on the parameter previous to observing the data.  \cite{Lindley1953},\cite{Lindley1990},\cite{Lindley2000}, and \cite{Bernardo2011}. 
  \item The formal/technical Bayesian (F-Bayesian). The one that has a prior and do a posteriori analysis, but his prior is the one that is usually used for such a problem, the prior that he knows how to handle, the prior that yields a good robust estimator, etc., \cite{CastiloEtAl}, \cite{berger2000bayesian}, and \cite{berger2013statistical}.
  \item The model based Bayesian (M-Bayesian). For him the prior has a frequentist interpretation, and is based on the same consideration as the model. For example, it is the way the machine whose behaviour he monitor, was constructed. It is doubted whether we should consider M-Bayesian as true Bayesian, since all frequentists are M-Bayesians. We argue that Thomas Bayes was such a Bayesian (or non-Bayesian). See   Technical Discussion \ref{BayesAndPrice}.
\end{enumerate}

For simplicity, all non-Bayesian approaches are referred to as frequentist.

One claim that we will not discuss in length is, that even if the prior is formal and does not represent a belief, the benefit of being an F-Bayesian is that he has an a posteriori distribution and hence can express the results of the experiment using probability terminology. In particular, the claim is, credible sets are conceptually simpler than confidence intervals (although the F-Bayesian struggles with tests whose frequentist interpretation  is clearer). I believe that a posteriori distribution without the strong foundation of a prior is creatio ex nihilo. If you can argue that your a posteriori distribution depends only weakly on your prior to a level that, for all practical purpose, all reasonable priors yield the same a posteriori distribution, this a posteriori without a priori distributions can be justified. Our argument is that  in the contexts we analyze and in the \nuexamples  examples we examine, this is not the case, since the good formal Bayes procedures are based on technical priors which cannot represent a belief and therefore the a posteriori distribution is not an a posteriori belief. I will not expand further  on that.   

These \nuexamples  examples are not simple, but they are much simpler than the many models in which Bayesian analysis is applied. We do not consider networks, deep or social. The correlations are relatively simple, there are no mixed models, etc. We wanted models which can be analyzed thoroughly, and in a relative transparent way. We believe that it is harder to argue for the Bayesian philosophical position in the more complex problems.
\vspace{3ex}

In Section \ref{math} we present three major mathematical claims in favor of the Bayesian approach, consistency (Section \ref{sec:consistency}), admissibility (Section \ref{sec:wald}), and rationality (\ref{sec:savage}). Then we exemplify why high dimensions pfesent real difficulty  for a true Bayesian: the space is, in some sense, empty, Section \ref{sec:empty}, the model employed should depend on the sample size, Section \ref{sec:samplesize}, a Bayesian procedure has, necessarily, the plug-in property, \ref{sec:pie}, and we should consider what did not happen \ref{sec:counterfactional}. Most of the discussion is devoted to a detailed analysis of \nuexamples  examples. Each examples is analyzed thoroughly both from frequentist considerations and from a Bayesian point of view. A well performing fequentist procedures exist. In most cases, good  F-Bayesian procedures are possible, but, we try to demonstrate, a honest S-Bayesian is going to fail.  Some of the discussion is deemed technical and is titled as Technical Note or given in the Appendix.

\vspace{1ex}
\begin{Technical}{Thomas Bayes: the first non-Bayesian.}{BayesAndPrice}
\label{BayesAndPrice}
\tcbpar 
The fundamental paper of the Bayesian paradigm is \cite{Bayes1763} as submitted by Richard Price. It is interesting to understand to what extent and in what sense Thomas Bayes was a ``Bayesian'' in the modern sense.  We certainly are not the first to ask this question, cf. \cite{GILLIES1987}, \cite{berger2000bayesian}, and \cite{Stigler1982} for different views.

It is hard to understand a seminal work written postmortem in a very early stage of the field and describe it using modern terminology. It is not necessarily the case that it has a consistent argument.  It may even doubted what exactly was written by Bayes and what was written by Price (although Price tries to be explicit about that).   

The published paper is 49 pages long. Of them, more than half---the first six pages, the last nineteen and half, and the footnotes---were written explicitly by  Price and represents his opinion, the rest are based on notes by Bayes. There is a big difference between the two writers: Price wrote an introduction and summary, discussed the implications (including theological and philosophical ones) and, in general, his text includes verbal discussion on one hand, and tedious calculations on the other. Compared to him, Bayes text is dry and purely mathematical without an introduction or summary, and, except in the Scholium, almost without any discussion.

Bayes's part starts with a very short preamble:  ``\emph{Given} the number of times in which an unknown event has happened and failed: \emph{Required} the chance that the probability of its happening in a single trial lies somewhere between any two degrees of probability that can be named.'' In modern terminology, after observing $X_n\dist B(n,\th)$, what can be said on the distribution of $\th$ and the probability the next trial in the sequence will be a success? 
It is interesting to note that the words \emph{chance} and \emph{probability} are defined explicitly as synonyms---Definition 6: ``By \emph{chance} I mean the same as probability.'' Thus, in modern terminology, the question is what is the posteriori distribution of $\th$ if we observe binomial random variable with probability $\th$ of success. 

Section 1 is   opened  with a set of seven basic definitions of probability theory. Probability is defined in Definition 5, while Definition 6 just says that {chance} has the same meaning.
Probability is  defined with respect to a real \emph{happening} of an objective event. The probability is   the ratio between its \emph{expected value} and the the actual materialized  value. Note that the ``expectation \ldots ought to be computed.''  The notion is objective and has a positive sense. It is not about subjective believes. I conjuncture that this is the reason he prefers assume that $P(X_n=k)\equiv 1/(n+1)$  as the primitive assumption---he prefers probabilities of real observed event ($X_n$) to the more abstract assumption of the probability distribution of the parameter.  Even in the case analyzed by Price (see below), where the analysis is more Bayesian than the above, the parameter has a real meaning, and the answer whether the bet on its being in a range is something that can objectively verified as it is a property of a real physical object (the lottery wheel). 

What is known today as Bayes Formula is given next as part of what Price refers to as ``a brief demonstration of the general laws of chance.'' In particular Prop. 3 (in the third page of the Bayes's part) states what can be written as $P(A\inter B)=P(A)P(B\mid A)$, while Prop. 5 (on the sixth page) claims $P(A\mid B)=P(A\inter B)/P(B)$, where $A$ is the first event and $B$ is the second event in the ordered pair $(A,B)$.

The main part of the paper is Section 2. It is the application of Bayes Formula to a specific model. As said, it considers $n$ Bernoulli trials,  $X_n\dist B(n,\th)$, and $\th$ has a standard uniform distribution.  It is important to say that there is nothing subjective about the uniform prior.  Bayes' starting point, as is written in the Scholium to the section,  was the assumption that for any $n=1,2,\dots$ and $k=0,\dots,n$ we assume $P(X_n=k)=1/(n+1)$. This assumption is mathematically equivalent to the assumption that $\th$ has a uniform distribution and this can be proved (with 18th century rigours) using the tools that are heavily used in this 1763 paper  (e.g., the law of larger numbers as applied to Bernoulli trials attributed in this essay to De Moivre. Seemingly, \cite{Stigler1982} is missing this point). Now, this is not a subjective prior and it  has a frequentist sense (a term that Bayes could not use, but considering sequences of trials with increasing to infinity length is the center of this paper, in fact, Price complains that De Movire's statements are meaningful only when the sequence length is infinite). Moreover, Bayes' prior is, in the frequentist sense, as exact as the binomial model and follows from the physical description of the described ideal experiment. 

Bayes analyzed an experiment with two identical  balls thrown similarly on a table and whose  resting points are recorded. The first ball is thrown once, the Bernoulli trials are the $n$ throws that follow, where success is recorded whenever the second ball rests on the table to the right of the resting point of the first ball. The assumption that  $P(X_n=k)=1/(n+1)$ is no more than the assumption that the first $n+1$ resting points are independent and identical  trials and the resting point of the first ball can, therefore, equally likely, be  any of the $n+1$ points. It is true that it is written in Postulate 1   that ``there shall be the same probability that it rest upon any one equal part of the plane as another and it must necessarily rest somewhere upon it.'' However, this is not necessary and is done only for simplicity---replacing areas by measures (or cumulative distribution functions), and leaving the proof verbatim otherwise,  yields the same argument without the redundant postulate about the uniform distribution of the resting points.  We want to note that Price, in his general discussion in the Appendix follows this scheme were the binomial experiment starts \emph{after} the first event, and the first event is used to define the properties of the experiment, in particular, the prior is a consequence of the observed first event.

Bayes had the tools to deal with other type of priors (although with a great difficulty)---the most important mathematical book  Bayes published in his life was \cite{Bayes1736}: \emph{``An Introduction to the Doctrine of Fluxions: And Defence of the Mathematicans Against the Objections of the Author of the Analyst''} (the Mathematicans refer to Newton while the Analyst is Berkeley) in which a detailed presentation of calculus (AKA the doctrine of fluxions) was given.  Why does he insist on the uniform prior? In his introduction to the Essay,  Price claims that Bayes originally considered a much more general than the minimal model describes in Sections 1 and 2. This is consistent with what Bayes writes in  the Scholium.  Clearly, Price thinks that Bayes was over-cautious and that he would  prefer that Bayes would consider the more Bayesian assumption, that the prior is uniform. In fact, he (Price, but attributed to a disposed draft of Bayes) thinks that it ``must be'' uniform. What is this necessity? Is it technical, because he knows how to deal with the uniform distribution, or is a methodological necessity? I think that we should be careful with the presentation of Bayes's goals. He is not Bayesian. He does not investigate how to combine a partial knowledge with new evidence. His problem is defined very precisely. The first sentence of the Essay says: ``Given the number of times in which an unknown event has happened and failed: \emph{Regquired} the chance\dots'' The only input is the Data and nothing beyond it. In the Scholium it is stated precisely that he deals with ``the case of an event concerning the probability of which we \emph{absolutely} know nothing antecedently to any trial concerning it'' [emphasis: mine].  His problem is not how to incorporate different sources of knowledge, He wants to understand the epistemology of learning from experience without \emph{any} previous information.  His ``prior'' should be neutral, not based on a belief, but the one that follows from the assumption that any result is as likely as any other. It should be an event which he calls ``an unknown event,'' which I understand in modern terms simple as using a non-informative prior.  However, he still warn: ``I may justly reason concerning it as if its probability had be first unfixed, and then determined in such a manner as to give me no reason to think that, in a certain number of trials, it should rather happen any one possible number of times than another.''  His result are relevant if and only if the underline mechanism justifies the uniform prior. The result, that unlike a true Bayesian, his conclusion is not a modified belief, but he is able to say, ``if nothing is known concerning an event but that it happened $p$ time and failed $q$ times \ldots, and from hence I guess that the probability  of its happening\ldots the chance I am in right in my guess is\ldots'' The uniform is the true distribution of the real physical parameter and the conclusion is a prediction of its true value, a prediction which is either right or wrong.

 The name Hume is not mentioned in this paper (nor any other non-mathematician), but the response to Hume seems to be one of the main motivation, \cite{GILLIES1987}. The question posed by Price is how to go beyond the law of large numbers as given by De Moivre and Simpson, and be able to argue from long but finite sequences on the laws governing the phenomena, ``what reason we have for believing that there are in the constitution of things fixt laws \dots and, therefore, the frame of the world must be the effect of the wisdom and power of an intelligent cause; and thus to confirm the argument taken from the final causes for the existence of the Deity.'' Price argues that this goal is common to the authors of the paper and De Moivre, but, Price says, the asymptotic methods of the latter do not give a convincing answer after observing a long but finite sequence.   We should note that the essay cannot be an answer to Hume. The all discussion is within the framework of repeated Bernoulli trials, and this is exactly what Hume's skeptic question is all about---how can we argue for stationarity. In fact, Price is aware that all he was able is ``to show us what conclusions to draw from \emph{uniform} experience.'' We always can suspect that there are forces in nature which may interfere with the operation of the observed constancy.

In the Appendix, Price applies an example of a polyhedron with unknown properties that is tossed millions times (if we will be extremely petty, conditioning on the first observed face is formally  wrong). We obsere the same  face over and over again, and the question is what is the probability of observing another face. He apply his tedious calculations and show that it becomes less likely but always possible that what we observe is not a deterministic law of nature.  Here he explains that that this is a simile to the human experience. ``But if we had once seen any particular effects, as the burning of wood on putting it into fire, or the failing of a stone on detaching it from continguous objects, then the conclusions to be drawn from any number of subsequent events of the same kind would be to be determined in the same manner with the conclusions just mentioned to constitution of the solid I have supposed.'' 

But then Price becomes the first Bayesian, or at least a pre-Bayesian. He is aware that, in reality, the prior is not going to be uniform but much more concentrate around $\th=1$: ``After having observed for some time the course of events it would be found that the operations of nature are in general regular,\ldots The consideration of this will cause one or a few experiments often a much stronger expectation of success\ldots than would otherwise have been reasonable.''  It can be argued that his problem is in the line of sequential empirical Bayes, \cite{Samuel1963}. 

Next, in the last part of the Appendix he considers a lottery where the ratio of \emph{Blanks} to \emph{Prizes} is observed to be 1:10. He considers different situations in which this same ratio is observed after 11, 22, 110, 1100, and 11,000.  In all five cases he is asking essentially the same question. In modern terminology the question is: what is the a posteriori distribution that the odds of the lottery are between $1:11$ to $1:9$? To be more precise, his question is whether the chance of winning in  betting on this range is larger than half, i.e., whether $P(\frac{1}{12}<\th<\frac{1}{10}\mid \text{Data})>0.5$ or not. What makes him a Bayesian is  statements like that with 110 observations ``it will remain unlikely that the true proportion should lie between 9 to 1 and 11 to 1, the chance for this being $0.2506$.'' He really takes his uniform prior seriously to claim the a posteriori with such a precision! An observer can bet safely 1:3 on the true proportion to be in this range.  However, to be a real S-Bayesian, he had to argue for the prior that justifies this bet. To the best of my understanding, he does not have an argument.

\end{Technical}
\vspace{1ex}

\section{The mathematical foundations}\label{math}

There are three mathematical major arguments for Bayesian statistics. In short, the first claim is that if there are consistent estimators, then Bayes procedures are necessarily consistent. Secondly, all admissible decisions are, at least in some sense, Bayesian. Thirdly, rational behavior (e.g., one that follows Savage's postulates) is Bayesian.  Surprisingly, although every one agrees with the mathematical arguments, many are not convinced with what seems to be  their  implications. In this section we argue that, indeed, one cannot base the Bayesian ideology on these three claims. I believe this is true in general, but it is clearly the case in the current ultra-high dimensional statistics---exactly where the current Bayesian statistics flourishes. 

\subsection{Two reference points}

When is the Bayesian analysis done? We should differentiate between two frameworks; the one  we  call the a priori (or by the protocol) Bayesian analysis,  and the other which is a posteriori (or at the obtained data) procedure. The standard statistical decision discussion start with a loss function $L(\th,d)$ which describes the loss occurred when the true value of the parameter that governs the data collection is $\th$ and an act $d$ (e.g., its guessed value, AKA the estimate). Of course statisticians are pessimistic in nature, and unlike economist who put in the front utility and gain, statistician thinks about loss and risk.  The second element is a data collection stage in which data is collected or, at least brought to the front if it was collected in the past. A statistical procedure is the decision to be done after the data was analyzed. Formally, it is a function of the data (or, more generally, the data and an external randomization. Technically, a decision procedure is a Markov kernel).   A theoretically proper frequentist analysis is actually done before the data are brought forth. It is the analysis of the what is going to be done. The orthodox frequentist is analyzing the functions that he would  apply to the data, not their eventual specific values.  After the data is investigated, the statistician should only calculate the values of the functions decided upon and reporting them. Statistical decisions are these functions. The loss occurs is, thus, necessarily, random, since the act $d$ is a function of the yet to be observed data, and thus as random as the data. If the decision function is $\delta$, the expected value of the loss, $\E_\th L(\th,\delta(X))$ is called the \emph{risk}, and is denoted by $R(\th,\del)$. Note that the second parameter of $R$ is a function and that $\th$ appears twice in the expectation:  once as an argument inside the loss function and once as the parameter under which the expectation is done.  

This is the only way that frequentist analysis can be understood. Even in the reality when the reported analysis was envisioned only after an initial analysis was done, the report is as if it was done by the protocol, considering the counterfactual of the data as a random value. Notions like consistent estimator, efficiency, admissibility,  P value, and confidence intervals have no sense unless the they apply to random data. 

The Bayesian position is more complex. The a priori Bayesian has the same consideration as the frequentist. She considers the risk function and choose the procedure which would minimize the expected loss, or the risk function. Since the Bayesian considers the parameter as a random variable, she would evaluate $\del$ by considering $\E_\uppi R(\th,\del)$, where $\uppi$ is the a priori distribution of $\th$. The a posteriori Bayesian considers the loss function and its expectation after observing the data, expectation calculated using his a posteriori distribution. Of course, it is a trivial mathematical fact that in normal circumstances there is no real difference between the two---the optimal a priori procedure is to minimize the a posteriori expectation of loss function. However, there are substantial differences between the two when it comes to the evaluations of the procedures and their positions.   The a priori Bayesian considers the sample space explicitly (you need it for calculating the risk function), also, like the a posteriori Bayesian, eventually it would be factored out. However,  the prior is essential for him, and notion like admissibility and consistency are meaningful. On the other hand,  it may seem that the a posteriori Bayesian cares more about having a posteriori distribution than the validity of the prior that gives it (e.g.,  noninformative priors, \cite{berger2000bayesian}).

\subsection{All Bayes procedures are consistent, but only on their own terms}\label{sec:consistency}

The emphasis of this paper is the ultra high dimensional settings where the dimension of the problem increases with $p$. Sometimes it is implicitly so. For example in the classical nonparametric estimation of a density of a real variable, or the nonparametric regression of one variable on another, where the latter has a  very low dimension. Other times it is explicit, e.g., the regression of a real variable on $p$ real variables based on $n$ observations, where $p>n$, and thus, necessarily, $p=p_n$ and the parameter itself depends on the sample size $n$. As a result the standard notion of consistent estimator is not clear.  To emphasis this, we  use the notion of persistent estimator introduced in \cite{GreenshteinRitov2004}. An estimator $\hat\beta_n$ of a parameter $\beta_n$ is $c_n$-persistent in distance $d_n$ if $d_n(\beta_n,\hat\beta_n)$ is of order $c_n$ as $n$ grows to infinity.   

Note that when we speak in this paper on a parameter space, an estimator, or a prior, we actually consider sequences that depend on the sample size $n$. If we consider the simple linear model, $Y=\beta\t X+\eps$, we consider parameter $\beta$ with dimension $p=p_n$ which grows (sometimes very fast) with $n$. Thus the prior is a prior on $p_n$ variables, and the estimator is based on $n$ observations and has values in $\R^{p_n}$ and thus necessarily depend on $n$. The embedding in a growing dimension is not only technical, to be able to speak about asymptotics, it may be essential: the dimension of the problem is within the eyes of the beholder, since the number of confounders is infinite---any function of the existing confounders and any iteraction of thereof is another confounder. The question is where do we stop, and stopping point is a function of the sample size and the willingness of the researcher to deal with increased complexity.

The strong result described formally in Technical Discussion \ref{Technical:const} says that the existence of any uniformly persistent estimator implies that whatever is the prior over the parameter space, the posterior distribution most likely concentrated around the true value. In that sense, the Bayesian cannot be wrong.

This is a very strong result for the M-Bayesian---the estimator is consistent where it matters. It may be strong enough for the  S-Bayesian---it is consistent for parameters he cared about. If $\scb$ is a compact parameter set that does not depend on $n$, and if the loss is uniformly continuous, this result ensure that a Bayes estimator is consistent for any parameter in its support. However, we claim that it is a very weak result in the ultra-high dimension spaces we discuss nowadays, in particular for a Bayesian whose prior does not represent knowledge or a honest belief.  A sequence of  priors is always concentrated on  small sets, and two very similar priors are asymptotically orthogonal, since there is  a sequence of sets such that the probabilities of these sets converge to 1 under one of the priors and to 0 under the other.  This implies that there are interesting sequences $\beta_n$, that cannot not honestly be rejected a priori, that the Bayes procedures would not be consistent under them. 

\vspace{1ex}
\begin{Technical}{Bayes Consistency}{const}
\tcbpar
\cite{RBGK} argued that the existence of a consistent estimator ensures that \emph{any} Bayes procedure is consistent in its own terms. To be precise, they considered a loss function $\ell_n(\beta,\hat\beta)$---a metric which depends on $n$. In the applications it is typically a scaled  norm, e.g., $n^{\al}\|\hat\beta-\beta\|_1$. We extend their result to be relevant to our setup, and  consider  a parameter space $\scb_n$ and observation $X_n\dist P_{\beta_n}$ for $\beta_n\in\scb_n$. 

An estimator $\hat\beta_n=\hat\beta_n(X_n)$ is $\ell_n$ uniformly persistent on  $\scb_n$ if under all parameter values in $\scb_n$, $\ell_n(\beta_n,\hat\beta_n)$ is uniformly bounded in probability (i.e., for all  $M_n\to\en$ and $\beta_n\in\scb_n$, $\lim_{n\to\en}P_{\beta_n}\bigl(\ell_n(\beta_n,\hat\beta_n)>M_n\bigr)=0$).  

For any $\uppi_n$, a prior distribution on $\scb_n$, let $\uppi_n^{X_n}$ be the formal a posteriori distribution, and let ${\beta^{\uppi_n}_n\mid X_n}\dist\uppi_n^{X_n}$. That is, we consider the Markov process $\beta_n\to X_n \to \beta_n^{\uppi_n}$, where  $\beta_n,\beta_n^{\uppi_n}$ have marginal distribution $\uppi_n$ and are \iid given $X_n$. The prior is $\ell_n$ uniformly persistent if  there is subset $\scb'_n\subseteq\scb_n$ with $\uppi_n(\scb'_n)\to 1$, such that for all  $M_n\to\en$ and for all $\beta_n\in\scb_n'$, $P_{\beta_n}\bigl(\ell_n(\beta_n,{\beta}^{\uppi_n}_n)>M_n)=0\bigr)$. In other words, a prior is $\ell_n$-persistent, if for most parameter values, the posteriori is likely to concentrate around that value (as measured by $\ell_n$).

Suppose the frequentist estimator is $\ti\beta_n$ is $\ell_n$ uniformly persistent. This implies tht $\ti\beta_n$ is close to the true $\beta_n$ (as measure by $\ell_n$). Now, $\ti\beta_n$ is a (random) function of $X_n$, and conditional on $X_n$, $\beta_n$ and $\beta_n^{\uppi_n}$ are i.i.d., therefore,  if $\ti\beta_n$ is close in probability to $\beta_n$ it is necessarily close to $\ti\beta_n^{\uppi_n}$ in probability. But then, $\beta_n^{\uppi_n}$ is close to $\beta_n$ by the triangle inequality, and $\uppi_n$ is persistent. Formally:

\begin{theorem}\label{th:consistency}
If there is an  $\ell_n$ uniformly persistent procedure $\ti\beta_n$ on $\scb_n$, then any prior on $\scb_n$ is $\ell_n$ uniformly persistent.   
\end{theorem}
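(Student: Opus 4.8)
The plan is to make precise the heuristic already stated in the text: on the joint law in which $\beta_n\to X_n\to\beta_n^{\uppi_n}$, the estimator $\ti\beta_n$ depends on $X_n$ only, conditionally on $X_n$ the pair $(\beta_n,\beta_n^{\uppi_n})$ is exchangeable, so $\ti\beta_n$ is ``as close'' to the posterior draw $\beta_n^{\uppi_n}$ as it is to the truth $\beta_n$; the triangle inequality for the metric $\ell_n$ then transfers closeness, and a disintegration against $\uppi_n$ produces the per--parameter statement. The one substantive step is that last disintegration.

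Concretely, let $\Pr_{\uppi_n}$ be the law of $(\beta_n,X_n,\beta_n^{\uppi_n})$ with $\beta_n\sim\uppi_n$, $X_n\mid\beta_n\dist P_{\beta_n}$, and $\beta_n^{\uppi_n}\mid X_n\dist\uppi_n^{X_n}$, exactly the Markov process set up before the theorem. Since $\ti\beta_n=\ti\beta_n(X_n)$ is $\sigma(X_n)$--measurable and both $\beta_n$ and $\beta_n^{\uppi_n}$ have conditional law $\uppi_n^{X_n}$ given $X_n$, conditioning on $X_n$ gives $(\ti\beta_n,\beta_n)\overset{d}{=}(\ti\beta_n,\beta_n^{\uppi_n})$ under $\Pr_{\uppi_n}$, hence $\ell_n(\ti\beta_n,\beta_n)\overset{d}{=}\ell_n(\ti\beta_n,\beta_n^{\uppi_n})$. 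Uniform persistence of $\ti\beta_n$ says $\sup_{\beta\in\scb_n}P_\beta(\ell_n(\beta,\ti\beta_n)>M_n)\to0$ for every $M_n\to\en$; averaging this over $\beta_n\sim\uppi_n$ gives $\Pr_{\uppi_n}(\ell_n(\beta_n,\ti\beta_n)>M_n)\to0$, and by the distributional identity also $\Pr_{\uppi_n}(\ell_n(\ti\beta_n,\beta_n^{\uppi_n})>M_n)\to0$. As $\ell_n$ is a metric, $\{\ell_n(\beta_n,\beta_n^{\uppi_n})>2M_n\}\subseteq\{\ell_n(\beta_n,\ti\beta_n)>M_n\}\cup\{\ell_n(\ti\beta_n,\beta_n^{\uppi_n})>M_n\}$, whence $\Pr_{\uppi_n}(\ell_n(\beta_n,\beta_n^{\uppi_n})>M_n)\to0$ for every $M_n\to\en$.

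To finish, fix $M_n\to\en$ and put $a_n:=\Pr_{\uppi_n}(\ell_n(\beta_n,\beta_n^{\uppi_n})>M_n)=\int_{\scb_n}P_\beta(\ell_n(\beta,\beta_n^{\uppi_n})>M_n)\,\uppi_n(d\beta)\to0$. Markov's inequality under $\uppi_n$ shows that $\scb'_n:=\{\beta\in\scb_n:P_\beta(\ell_n(\beta,\beta_n^{\uppi_n})>M_n)\le\sqrt{a_n}\}$ has $\uppi_n(\scb'_n)\ge1-\sqrt{a_n}\to1$ while $P_\beta(\ell_n(\beta,\beta_n^{\uppi_n})>M_n)\to0$ for every $\beta\in\scb'_n$, i.e.\ $\uppi_n$ is $\ell_n$ uniformly persistent. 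I expect the main obstacle to be this step, in two respects: first, getting the conditional symmetry $(\ti\beta_n,\beta_n)\overset{d}{=}(\ti\beta_n,\beta_n^{\uppi_n})$ cleanly (everything else is the triangle inequality and Fubini--Markov); and second, if one insists on reading the definition with a \emph{single} exceptional set $\scb'_n$ serving all rates $M_n\to\en$ at once, one must run the Markov step along levels $4^{-k}$ and intersect the complements over a slowly growing band of indices, using that uniform persistence is an $O_P(1)$--type statement rather than a merely pointwise one; taking $\scb'_n$ to depend on the sequence $(M_n)$, as above, avoids this bookkeeping.
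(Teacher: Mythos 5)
Your proposal is correct and follows essentially the same route as the paper's proof: the joint law of $(\beta_n,\ti\beta_n,\beta_n^{\uppi_n})$ with $\beta_n\dist\uppi_n$, the observation that $(\beta_n,\ti\beta_n)$ and $(\beta_n^{\uppi_n},\ti\beta_n)$ are equally distributed because $\ti\beta_n$ is a function of $X_n$ and $\beta_n,\beta_n^{\uppi_n}$ are conditionally i.i.d.\ given $X_n$, followed by the triangle inequality. The only difference is that the paper stops at the statement under the joint law $\bbp_n$, whereas you add the Markov-inequality disintegration producing the high-prior-probability set $\scb'_n$ demanded by the definition (and you rightly flag the extra bookkeeping needed if one insists on a single $\scb'_n$ serving all rates $M_n$ at once) --- a step the paper leaves implicit.
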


A weaker result  was proven in \cite{RBGK}. The proof is given in Appendix \ref{app:proofs}.

\end{Technical}
\vspace{1ex}

\begin{example}\label{ex:plm1}\addtocounter{nuexamples}{1}
Consider the vector parameter $\boldsymbol{\beta}_n=(\beta_{n,1},\dots,\beta_{n,p_n})$ of some problem. To the best of our a priori knowledge the $\beta_{n,i}$s are exchangeable and $|\beta_{n,i}|\le 1$. I.e., their order has no meaning and all permutations are a priori equally likely. Any subjective prior over the parameter set have some properties. For example, if $S_p\subset {1,\dots,p}$ and $|S_p|(1-|S_p|)\to\en$ (where $|S|$ is the number of entries in the set $S$), then 
 \eqsplit{
    \frac1{|S_p|}\sum_{j\in S_p}\beta_{n,j} - \frac1{p-|S_p|}\sum_{j\not\in S_p}\beta_{n,j}
  }
is of order   $\max\{ \sqrt{1/|S_p|},\sqrt{1/(p-|S_p|)}\bigr\}$. This is a very strong claim, which ``most'' sequences do not follow. The persistency of a hones prior claimed above is valid only on the rare sequences that obeys such conditions.

We consider  the following Neyman-Scott type model as a concrete example. Our sample is all the patients from the $n$ hospitals in the country during a year. The data do not include the identity of the hospital to keep privacy, but since we consider a meaningful hospital effect, we know which patients belong to same hospital. There are only a few patients per hospital, and for simplicity we assume that we have only two (or the first two patients during the year).  We consider a mixed model. Let $(g_i,h_i)$, $i=1,\dots,n$ be some unobserved values,  The model assumes: 
 \eqsplit{
    X_{i} &= g_i + \eta_{i}& &
    \\
    Y_{i} &= h_i+\th X_{i}+\eps_{i} 
    \\
    \ti X_{i} &= g_i + \ti \eta_{i}& &
    \\
    \ti Y_{i} &= h_i+\th \ti X_{i}+\ti\eps_{i},
  }
where  $(\eps_i,\eta_i)$ and $(\ti \eps_i, \ti \eta_i)$ are  \iid normal independent of $(g_i,h_i)$. 
The regression parameter  $\th$ is the only one of interest. We assume that a priori there is no relation between $\th$ and the $g$s and $h$s.
Note that 
 \eqsplit{
    \ti Y_{i}-Y_{i}&= \th (  \ti X_{i} - X_{i})+  \ti\eps_{i}-\eps_{i}.
  }
Thus, it is very easy to estimate $\th$. The conservative frequentist can just consider the regression of   $\ti Y_{i}-Y_{i}$ on $  \ti X_{i} - X_{i}$ which are free from the nuisance parameters $g_1,\dots,g_n$ and $h_1,\dots,h_n$. The Bayesian would do something similar, which depends on her prior on $g_1,\dots,g_n$ and $h_1,\dots,h_n$, but asymptotically similar to the frequentist approach.  Assuming that the $(g_i,h_i)$ are \iid observations is quite reasonable and safe (cf. \cite{efron2019bayes,ritov2024no}). If she assumes further that they are jointly normal, the observations $(X_i,Y_i,\ti X_i,\ti Y_i)$ are jointly normal, with observations consisting of the two different means, and the ten entries of the covariance matrix of the four variables, while the parameters are $\th$, the two means of $g$ and $h$, and the three entries of each of the covariance matrices  of $(g, h)$ and of   $(\eta, \eps)$---nine in total.     

We add now a different aspect. There are $w_1,\dots,w_n$, $w_i\in\{0,1\}$, and the second patient, $(\ti X_i, \ti Y_i)$  is observed only if $w_i=1$.    The $g_i$s and $h_i$s are  related to the quality of the treatment and the characterization of the patients. The $w$s on the other hand, are related to the size of the relevant population in the hospital area.  Recall that the identity of the hospital is concealed from the data analyzer, and therefore, the S-Bayesian has no much choice than to assume a priori that the $w$s are independent of the $(g,h)$s. He must admit,  this is a description of his knowledge (AKA, ignorance). In reality, there may be a small spurious correlation between them, which he sincerely believe that is negligible.  Note that the distribution of $(X,Y)$ has five observable but still the same nine parameters---a substantial overparametrization.

The conservative frequentist insists on being unbiased, would still have no real difficulty. He proceeds as before using only the observed pairs, i.e., the sub-sample with $w_i=1$.   The Bayesian, on the other hand, have no reason to ignore the sub-sample with $w_i=0$. Since, by his prior, it is just a random sub-sample and he can use it to learn about the joint distribution of $(g,h)$, which, by his prior, is going to improve the utilization of the sub-samples without missing observations, \cite{IHmodel}.
\end{example}
\subsection{All admissible procedures can be approximated by Bayesian procedures}\label{sec:wald}

A classical claim going to Wald is that all admissible procedures can be approximated by Bayesian procedures. \cite{RBGK}  adapted  the result to the context of models with dimension growing up to infinite in a fast rate. The standard approach adopted by \cite{RBGK} is to approximate the statistical  model by a finite family of the distribution. Then, for a given loss function, it is argued that the collection of all  possible risk  functions (of randomized procedures) generates a convex set, and all admissible procedures are on its boundary. Application of the Hahn-Banach Theorem show that the admissible procedures in this approximation are Bayes, and typically unique.  We outlined this argument not in the hope that the reader who does not know it already the proof will  understand it, but, mainly,  to emphasis  what this argument use, and, in particular, what it does not consider.

In summary,  the prior suggested by this proof is a function of the utility function and the admissible procedure the frequentist statistician use. Nothing is said about the world view of the statistician. In fact, in general, we will argue, the implied prior \emph{cannot} represent a world view. The only conclusion that can be derived from this result is that if you have a statistical computer program that can implement only Bayesian procedures, you can use it to solve most statistical problems.

  Suppose we have a large sample of \iid observations, $X_1,\dots,X_n$, from a parametric family $\scp=\{P_\th:\;\th\in\Th\}$ for some $\Th\subseteq\R^p$, where $p$ is small. We then estimate $\th$ by our favorite estimator, $\hat\th$, may it the MLE or a Bayesian estimate with respect to some prior. Our object of interest may be $\th$, but it can be another object,  for example, the cumulative distribution function ($cdf$) at a particular point, the median of distribution, its mean, etc. We may even wish to estimate the density function. Typically, we will use a simple plug-in estimator, e.g. $F_{\hat\th}(x_0)$, $F_{\hat\th}^{-1}(0.5)$, $\int x f_{\hat\th}x\,dx$, or $f_{\hat\th}(\cdot)$, respectively, where $F_\th$ and $f_\th$ are the cdf and the density of $P_\th$.

  However, if we consider the similar nonparametric problem, where it is assumed only that the density belong to some smoothness class, we do not use a plug-in estimator. A frequentist will, probably, estimate the cdf, the mean, and the median using the empirical distribution function (i.e., $n^{-1}\sum_{i=1}^{n}\ind(X_i\le x_0)$, $n^{-1}\sum_{i=1}^{n}X_i$, and $\argmin_{x}n^{-1}\sum_{i=1}^{n}\ind(X_i\le x)\ge 0.5$, respectively), while the density is going to be estimated using a kernel estimate (or an equivalent statistics). If our interest is the derivative of the density, we will use, probably, another kernel estimator with a different bandwidth (if the same bandwidth is used,  then at least one of these two estimator will not achieve the optimal convergence rate).
  
  Similarly, a typical F-Bayesian considers different priors for the different targets of estimation. However, unlike the frequentist, one may expect the Bayesian to be methodologically consistent.   After all, there is only one unknown object, the probability function governing the distribution of the sample, and the prior should quantify the uncertainty of the Bayesian about this parameter. The S-Bayesian cannot do this.

\begin{example}\addtocounter{nuexamples}{1}
  We consider now a toy problem---the white noise model. It can be argued that it is isomorphic to the density estimation problem described above (e.g., using harmonic expansion of the density). However, it is conceptually simple. Suppose $X_i\dist \scn(\mu_i,n^{-1})$, $i=1,2,\dots, $, independent. Practically, as common today, their number is bounded by some  $p\gg n$. That is, at stage $n$ we observe essentially an infinite number of random constants with independent added noise. We assume that, for sure, $|\mu_i|<c_i$. We want to estimate $\th=\sum\al_i\mu_i$, for some constants $\al_i>0$ such that $\sum_{i=1}^{\en}\al_i=\en$. However,  both $\sum_i\al_i^2<\en$ and $\sum_{i=1}^{\en}|\al_ic_i|<\en$. There is nothing that tell us how the $\mu_i$s are related to each other, and, admittedly, they are a priori free, independent variables. Moreover, $\mu_1/c_1,\mu_2/c_2,\dots$ are, to the best of our knowledge, exchangeable and symmetric about 0. 
  
  The loss function is the standard quadratic loss. That is, the Bayes estimator is the expected value. Since the expectation of a sum is the sum of the expectations, $\th_n^B=\sum_{i=1}^{\en}\al_i\mu_i^B$, where $\hat\th^B,\hat\mu_1^B,\hat\mu_2^B,\dots$ are the Bayes estimator of the corresponding parameters, i.e., the a posteriori expectation. Similarly, the typical frequentist is using $\sum_{i=1}^{p}\al_iX_i$, or, maybe, $\sum_{i=1}^{p}\al_i[X_i]^{c_i}_{c_i}$, where $[x]_a^b$ is $x$ truncated to the interval $[a,b]$. 
  
  But now, consider a more difficult case: $\al_i=1/i^\xi$ and $c_i=1/i^\nu$, for $\xi<1/2$ and $\nu+\xi>1$.  The typical frequentist would use the estimator $\hat\th_n=\sum_{i=1}^{m}\al_i X_i$ for $m\approx n^{1/(2\nu-1)}$. It has a variance $n\sum_{i=1}^{m}\al_i^2$ and its bias-squared is equal to $\bigl(\sum_{i=m}^{\en}|\al_ic_i|\bigr)^2$, both of them of order $n^{-1+\frac{1-2\xi}{2\nu-1}}$. It is less than the parametric rate of $1/n$, but it can be argue that this is the minimax rate.

  The conservative S-Bayesian, who sticks to his beliefs, would use the same prior as before. Note that for $i\gg n^{1/(2\nu)}$, the prior distribution of $\mu_i$,  which necessarily is supported on $[-c_i,c_i]$ is much more informative about $\mu_i$ than $X_i$, whose error is of order $n^{-1/2}\gg c_i$. Thus, his estimator of $\mu_i$ hardly depends on the value of $X_i$. Compare it to the above mentioned typical frequentist who estimates $\th$ using $X_i$ all the way up to $n^{1/(2\nu-1)}\gg n^{1/(2\nu)}$. It may seem that the Bayesian is more sensible. 
  
  However, this is not the case. The frequentist is afraid  from the bias more than he worries about the variance. Bias accumulates when we add the weighted estimates of the $\mu_i$s. Zero mean noise, by nature, accumulate only in the mean square, since there is partial cancelation between periods when the noise is positive and periods when the noice is negative. The Bayesian by adding the terms between $n^{1/(2\nu)} $ to  $n^{1/(2\nu-1)} $ may add a bias of order $n^{-1+\frac{1-2\xi}{2\nu}}$ which is larger than the frequentist typical error. This happens since, for the Bayesian, $\mu_i$ itself is a random variables, and adding the above mentioned terms for the Bayesian is exactly like adding noise for the frequentist---the terms partially cancel each other.
  
  But, we know that there are other priors, and there are Bayes estimators which are as good as the estimator used by the frequentist. For example, suppose that other the prior, $\mu_i = \mu^*_i \al_i$, for $m^*=n^{1/(2\nu)}<i<n^{1/(2\nu-1)}=m$ and $\mu_i^*$ \iid with some prior distribution on $[-n^{-\frac{\nu-\xi}{2\nu-1}},+n^{-\frac{\nu-\xi}{2\nu-1}}]$. If this is the prior, then $T=\sum_{i=m^*}^{m}\al_i X_i$ is a sufficient statistics for $\mu$, and therefore the Bayesian is going to use the same statistics as the frequentist. Note that this prior is different from the prior discussed above, and it depends heavily on the specific functional to be estimated.
  
  I leave it to the reader what world view is represented by this prior.
  
\end{example}

\begin{example}\addtocounter{nuexamples}{1}
\label{ex:plm2}
Consider the same model as in Example \ref{ex:plm1}. There is a simple Bayesian fix. If under this prior $(g_i,h_i,\eps_i,\eta_i)$ has a  distribution that depends on the value of $w$. As a result, the sub-sample, $\{i:\;w_i=0\}$ has no information relevant to $\th$.
Most F-Bayesians feel good about such a prior. It it true that it does not express their  world-view, but it protects well against spurious correlation between $w$ and $(g,h)$.   But, this just proves our point.
\end{example}

\subsection{Savage's rationality argument}
\label{sec:savage}

\cite{Savage1972} considers a rational agent who has to choose between different actions. He has a utility which depends on his action and the unknown state of the (small) world.  The argument is that if the agent follows some (arguable) postulates, he compares the weighted average of his utility for each action over the world states. Seemingly, for each action he considers the expectation of the utility with respect to some  distribution of the state of the world. However, nothing in the argument says that this expectation expresses his beliefs about the world.  

The relevant actions for the  a priori Bayesian are the decision functions. Naturally, he considers the utility of each decision function by the expectation of the (negative of the) risk function with the respect to his prior distribution of the parameter. However, this cannot be based on \cite{Savage1972}.
In his case, a minimal condition for the rationality of a procedure is that it is admissible. Although one can think about situations in which one prefers an inadmissible procedure over a procedure that dominates it. E.g., he may consider arguments like simplicity, computational feasibility, elegance, and robustness, we may consider having them as inadequate definition of the risk function, since from the utility-theory point of view, the risk function was supposed to be corrected and the procedure should be penalized if it lacks simplicity or is numerically infeasible. Thus, we should be concerned only with admissible procedures.  But  admissible procedures are (almost) Bayesian as we already argued,  \cite{Savage1972}  does not add much information. However, we already also argued that the distribution needed for making a non-subjective Bayes procedures admissible cannot express subjective a priori understanding.

The situation of the a posteriori Bayesian is different. The time point in which she may consult \cite{Savage1972} is after observing the data, and she should choose an action, not a procedure. The Bayesian argument says that the loss function should be weighted by the a posteriori distribution, but this is more than can be found in the argument of \cite{Savage1972}. Nothing compels him to relate what he does after observing $X$ to what he would do after observing $X'$ (an important point of the a posteriori Bayesian that data that was not observed should not be considered). 

\begin{example}\label{ex:CODA}\addtocounter{nuexamples}{1}
  We try now to examplify that the rational prior probability may not describe a world-view but is defined by the best (frequentist) performance of the resulting decision procedure. We consider a simple model in which there are  $n$ unknown  parameters $p_1,\dots,p_n$, (essentially) unrelated and known sampling weights $w_1,\dots,w_n$. The sampling weights defined independent sample indicators $S_1,\dots,S_n$, $P(S_i=1)=w_i$. If $S_i=1$, we observe a Bernoulli random variable $Y_i$ with $P(Y_i=1)=p_i$. The target is to estimate the mean of the $p_i$s.
  
  There are two immediate estimators. The simpler one, and the one we claim the S-Bayesian should approximately use, is the mean of the observed $Y_i$s. The other is  the  modified Horovitz-Thompson estimator, which is semiparametrically efficient. This estimator is a weighted average of the observed $Y_i$s with weights inversely proportionally to the sampling probabilities $w_i$.  The latter estimator is asymptotically unbiased. The estimator used by the S-Bayesian will have a somewhat smaller variance, but it  fails miserably being biased if the the prior assumptions do not hold and the weights and the probabilities have some small moderate (unexplained) correlation. It can be argued that in high dimensions, the Horovitz-Thompson estimator is the rational procedure. Now, it is certainly possible to have a formal prior which generates the frequentist estimator. Some were suggested in the literature.  But these formal priors would strongly deviate from the model assumptions (which both the Bayesian and the frequentist strongly believe to be, at least approximately, valid). This estimator works, but does not represent subjective beliefs about the value of the unknown parameters.  Thus, we argue, in this model, Bayesian computations can be used but the Bayesian formulation lack the methodological foundations. 
  
  The actual argument is tedious. See Appendix \ref{ex:CODAdetails}  for the necessary details of this example.
\end{example}

\section{What can get wrong in ultra-high dimensions }

There are a few things that can get wrong and which fails the Bayesian paradigm, even when the formal Bayesian computations can survive.
\subsection{High dimension space are empty}\label{sec:empty}

My imagination is failing me when the dimension is greater than 4. For example, I imagine a uniform distribution in the hundred dimensions unit ball as some extension of the way I perceive random samples in   three dimensions. But then the following facts are paradoxical. Suppose we have one million \iid observations uniformly distributed inside a 100 dimensions ball with radius 1. We could imagine that the points will be in all the ball from the center outward. However,  the distance of a random observation from the center  has the cumulative distribution function $r^{100}$. Thus, 99\% of the sample are in the outer thin shell of  width of approximately 0.045. In fact, with high probability, none of the one million observations will be within the inner ball of radius 0.87. More strickenly, the points are completely isolated and are very far one from another---most likely, the minimal distance between two random points out of the $0.5\times10^{12}$ pairs is higher than 0.94. For a given random point of the million in the unit ball, it is likely that the rest are at a distance which is between 1 and 1.7.         

This is simple, but it has many implications. The way Savage's and Wald's results are argued is based on finite approximation of the parameter space. If the dimension of the parameter space is ultra-high, where most current statistical discussion is, it is impossible to approximate the distributions with reasonably small number of points  without assuming very strong conditions. 

Another implications is that most of the mass of two distributions over the ultra-high dimension space may be on distjoints sets, although the two distributions are perceived similar. For obvious example, much of the elementary statistical analysis are that it is possible to test whether   $\th_1,\dots,\th_p$ are \iid standard normal or \iid Gaussian with mean 0.002 and variance 1 if $p>250,000$ (i.e., the number of pixels of $500\times500$ picture). But this means exactly that the behavior of a sample from $\scn(0,1)$ and $\scn(0.002,1)$ may be dramatically different in an important aspect. It is rare that subjective considerations can distinguish a priori between such similar pairs of distributions.  

This means that it is almost meaningless to say  that if an estimator is consistent in the sense that under ``most'' values of the parameter then it behaves properly. Even if the estimator is behaves properly under the prior which was used in its construction, it may behave poorly under a very similar prior.

The next subsections will exemplified the difficulty of having  prior on high dimensions. Many times, a reasonable prior (reasonable in that it generates reasonable estimators), would have to concentrate on a particular subset of the space. For example, it would concentrate near a particular low dimensional manifold, which is defined not by the subjective beliefs, but the particular application of the desired estimator. 

\subsection{The ``prior'' should depend on the sample size $n$}
\label{sec:samplesize}

One of the main implicationss of the Bayesian approach is the strong likelihood principle, \cite{birnbaum1962foundations}, \cite{berger1988likelihood}, and \cite{Mayo2014}. A conclusion is that the Bayesian procedure does not depend on unobserved data. Only on the prior and the  likelihood function of the actual observed data.  We consider in most cases a setup, in which we have a random sub-sample from a big data set, which itself is a sample from an ideal distribution.   It follows from the above logic that the prior cannot depend on the sample size. This works well for the simple models considered in  the twentieth century. It is less clear that this works in general when the models are complex with complexity that may grow with $n$. When we have a large data set, we typically are more ambitious and fit a more complex model to the data. It is not clear how it can be argued consistently that since you gave me one million observations and not just the thousand I hoped to get, I changed my uncertainties about the model parameters.

\
\begin{example}\label{thisexample}\addtocounter{nuexamples}{1}
  The bread and butter of the modern statistical analysis is the ultra-high dimensional linear model: $Y_i=\beta\t X_i+\eps_i$, $i=1,\dots,n$, where $\eps_i$ is independent of $X_i=(X_{i1},\dots,X_{ip})\t$, $p\gg n$. The dimension of $p$ can be as high as almost being exponential in $n$, so also the theoretical asymptotic is, necessarily, with $p=p_n$, the dependency is vague as long as $p_n\gg n$, and in the discussion below, we would consider  $p$ in some sense as fixed: $n$ as changing, but in the range where it is much smaller than $p$ and $p$ is hardly changing.  The goal is estimating $\beta$, either by itself, with the loss function $L(\beta,\hat\beta)=\|\hat\beta-\beta\|^2=\sum_{j=1}^{p}(\hat\beta_j-\beta_j)^2$, or as a predictor of $Y$: $L(\beta,\hat\beta)=n^{-1}\sum_{i=1}^{n}(\hat Y_i-Y_i)^2$, where $\hat Y_i=\hat\beta\t X_i$.   To simplify the discussion further, we may consider a toy version of it that retain the essense of the general model:  
   \eqsplit[uhdr]{
    \text{For every $n$:  }W_{nj} &= \beta_j+ n^{-1/2} \eps_j, \qquad j=1,\dots,p,\quad \eps_1,\eps_2,\dots  \text{ \iid\ }\scn(0,1).
    }
   The target now is estimating $\beta_1,\dots,\beta_p$ under the same quadratic loss function: $L(\beta,\hat \beta)=\sum_{j=1}^{p}(\hat\beta_j-\beta_j)^2$.  We assume that $\sum_{j=1}^{p}\beta_j^2<\en$
   
   Since $p\gg n$,   a simple fit of the model to the data yields nonsense. In the regression model the model will fit the noisy data exactly, and the error in the toy version would be $p/n\to\en$. The current standard approach  has two elements. First, a priori (in the Bayesian or frequentist sense) all the $\beta$ coefficients are equivalent. We do not have a reason to assume that one of them is larger than the other. But, the second point is, we have a reasonably good, low dimension sub-model. I.e., we can tactically assume that most coefficients are zero. The question is which of them, since a priori any small subset could be the best small set of explanatory variables. 
   
   There is a standard estimating procedure under stronger than necessary conditions on the design matrix, $n^{-1}\sum_{i=1}^{n}X_iX_i\t$, namely the lasso, suggested first by \cite{Tibshirani1996} and analyzed first in the context of ultra-high dimension analysis in \cite{GreenshteinRitov2004}. In its penalized version:
    \eqsplit{
        \hat\beta_L &= \argmin_{\ti\beta} \{\sum_{i=1}^{n}(Y_i-\ti\beta\t X_i)^2+\lambda \sum_{j=1}^{p}|\ti\beta_j|\},
     }
    where $|\beta|=\sum_{i=1}^{p}|\beta_i|$. This may look like a MAP (maximum a posteriori) estimator, as mention by \cite{Tibshirani1996} and argued in \cite{Gribonval}. Formally, indeed, it is the maximum of the posterior if the model is \eqref{uhdr} with $\eps_i\dist\scn(0,\sig^2)$, i.i.d.,  independent of the $X_i$s, and under the prior,  the $\beta_j$s  are \iid with  double-exponential distribution whose  mean is $\sig^2/\lambda$. There is a debate whether the MAP is a Bayesian procedure, it certainly does not follow the utility maximizer point-of-view for continuous parameter space. A standard Bayesian analysis was suggested by \cite{CastiloEtAl}. Usually it is done using a spike-and-slab prior. Consider the toy version for simplicity. The lasso solution for the toy version is
     \eqsplit[tve]{
        \hat\beta_L &= \argmin_{\ti\beta} \{\sum_{i=1}^{n}(W_{j}-\ti\beta_j)^2+\lambda \sum_{j=1}^{p}|\ti\beta_j|\}
        \\
        \implies\quad \hat\beta_{Lj} &= \argmin_{\ti\beta_j} \{(W_{j}-\ti\beta_j)^2+\lambda |\ti\beta_j|\}
        \\
        &= \begin{cases}
             W_j-\lm/2, & \;\;W_j>\lm/2 \\
             0& |W_j|\le\lm/2\\
             W_j +\lm/2 & \;\;W_j<-\lm/2.
           \end{cases} 
      }
      The spike-and-slab prior is a prior that considers the $\beta_j$s as i.i.d., with probability $1-\gamma$ that $\beta_j=0$, and, by the prior, if $\beta_j\ne 0$, then it is taken from some distribution, say, $\scn(0,\tau^2)$. The a posteriori distribution is that 
       \eqsplit{
        \uppi(\beta_j=0\mid W_j) &= \frac{(1-\gamma)\frac{1}{\sqrt{2\pi}}e^{-nW_j^2/2}} {(1-\gamma)\frac{1}{\sqrt{2\pi}}e^{-nW_j^2/2}+ \gamma \frac{1}{\sqrt{2\uppi(n^{-1}+\tau^2)}} e^{-W_j^2/2(n^{-1} +\tau^2)} } 
        \\[2ex]
        &= \frac{1}{1+ \frac{\gamma}{1-\gamma}  \frac{1}{\sqrt{n^{-1}+\tau^2}}  e^{\frac{nW_j^2}{2}  \frac{n\tau^2}{1+n\tau^2} }},
        \\[2ex]
        \uppi(\beta_j\mid W_j) &=   \frac{\frac{\gamma}{2\pi\sqrt{\tau^2/n}} e^{-\beta_j^2/2\tau^2}  e^{ -n(\beta_j-W_j)^2/2} }   {(1-\gamma)\frac{1}{\sqrt{2\pi}}e^{-nW_j^2/2}+ \gamma \frac{1}{\sqrt{2\uppi(n^{-1}+\tau^2)}} e^{-W_j^2/2(n^{-1} +\tau^2)}} 
        \\[2ex]
        &=  \uppi(\beta_j\ne 0\mid W_j) \sqrt{\frac{1+n\tau^2}{\tau^2}}  e^{-\frac12(n+\tau^{-2})(\beta_j - \frac{n\tau^2}{n\tau^2+1}W_j )^2}.
        }
     Thus, the a posteriori distribution has a point mass at zero, and otherwise it has $\scn(\frac{n\tau^2}{1+n\tau^2}W_j,   \frac{\tau^2}{1+n\tau^2}  )$.  If $\gamma$ and $\tau$ were fixed in $n$, the sample size is large relative to the a priori slab variance and most of the $\beta_j$s are 0: 
      \eqsplit{
        \uppi(\beta_j=0\mid W_j) &\approx \frac{1}{1+\gamma e^{nW_j^2/2}}
        \\
        \uppi(\beta_j\mid W_j) &\approx \frac{1}{1+\gamma e^{-nW_j^2/2}} \sqrt{n}e^{-n(\beta_j-W_j)^2/2}. 
       }
       The Bayesian estimator is a smooth version of the lasso estimator of \eqref{tve} with $\lm\approx \sqrt{2|\log\gamma|/n} $.
     
     Now, we don't say that the Bayesian techniques are useless as a computation tools. We do not argue against a statistician that find a good estimator and then a prior which generates it. We try to argue that doing this is not within the S-Bayesian paradigm that the prior expresses prior knowledge. A particular demand from a priori statement (i.e., a statement that is done before the sample is known) is that parameters of the prior would not depend on the sample size. In particular $\gamma$ and $\tau^2$ should be fixed. In fact, if we assume that $\sum_{j=1}^{p}\beta_j^2\approx c$, then $\gamma p\tau^2\approx c$. In particular, $\gamma p$, the predicted number of non-zero terms does not grow with $n$. This is in contrast to the standard assumptions in the sparse model estimation, that the number of non-zero terms is growing with $n$. For example, if $\beta_{(1)},\dots, \beta_{(p)}$ are $\beta_1,\dots,\beta$ ordered from the largest to the smallest absolute value, then a standard assumption is that $\beta_{(j)}=0$ for $j>n/\log(n)$, or $|\beta_{(j)}|<j^{\al}$ for some $\al>1/2$. In the latter case, we consider the models with $M=n^{1/(1+2\al)} $ non-zero terms. The frequentist is not oblige to any prior, his model is tentative and maximize what can be learn from the data. For example, \cite{GreenshteinRitov2004, RigolletTsybakov2012} consider the best model with $M=M_n$ non-zeros coefficients. If we consider $\beta_{(j)}\approx j^{-\al}$, the optimal model size to fit the toy version minimizes the bias contribution of a small model, $\approx \sum_{k=M}^{p}k^{-2\al}\approx M^{-2\al+1}$ and the estimation error, $M/n$, thus,  $M_n\approx n^{1/(2\al)}$.  Thus the optimal value of  $\lm$ in \eqref{tve} satisfies     
      \eqsplit{
        \frac{n^{1/(2\al)}}{p} &=  2\bigl(1- \Phi(\sqrt{n}\lm/2)\bigr) \approx \frac{4}{\sqrt{2\pi n}\lm}e^{-n\lm^2/8},
        \intertext{implying}
        \lm &\approx   \sqrt{\frac{8\bigl(\log( p)-\log(n)/2\al\bigr)}{n}}.
       }
     The dependency of $\lm$ on $n$ is weak, but is necessary for having a good estimator. In practice, the fine tuning of $\lm$ can be done using a cross-validation-like procedure. 
     
     The frequentist may consider, as above, that $j^{\al}|\beta_{(j)}|$ is bounded. The $\beta_j$ are exchangeable, can they be considered as a simple sample? That is, can we find a distribution such that its empirical distribution is $\bbp_n(\xi) \approx 1 - c/(p\xi^{1/\al}) $? Not directly. Of course, one can generate $\ti\beta_j \dist \scn(0,j^{-2\al})$, $j=1,2,\dots,p$, i.i.d.,  and then take a random permutation of them.  The spike-and-slab prior tries and fails to generate something like that. The non-zero terms are too big and their numbers does not grow with $n$. The prior of \cite{CastiloEtAl} has two independent components. The number of non-zero terms and their distributions. The prior of the number of components  depends on $p$, which, by itself is problematic. More problematic, is that the prior is constructed such that it would give a reasonably large number of non-zero terms as a posterior, while by the prior itself, their number is actually pretty small, and is bounded with high probability at a bound that depends on $p$.  See Technical Note \ref{Technical:uhdModelSelection}.
     
     \vspace{1ex}
     \begin{Technical}{A non-Bayesian prior }{uhdModelSelection}
     \tcbpar
     A Bayesian analysis of the ultra-high dimension regression model is presented in  \cite{CastiloEtAl}. We are going to discuss their prior and whether it is relevant to the Bayesian paradigm. They consider a prior of the form:
      \eqsplit{
        \pi(\beta) &= \pi(|\scm|)\frac{1}{\binom{p}{|\scm(\beta)|}}\pi(\beta_{\scm(\beta)})\pi(\beta_{\scm(\beta)^\perp})  
       }
       Where $\scm(\beta)\subseteq\{1,\dots,p\}$ is the set of the significant non-zero coordinates of $\beta$, $|\scm(\beta)|$ is its cardinality, and $\beta_{\scm }$ are the value of the non-zero terms. Thus, the prior  considers the process of selecting the size of the model, then uniformly one of the subsets of $\{1,\dots,p\}$ of this size, and then the value of these coordinates.
       
       The problems with this prior start with the model size. Their main assumptions is that model size is sub-exponential: 
       \eqsplit{
            \pi(|\scm|=m)\le C_1\Bigl(\frac{C_2}{p^{\al}}\Bigr)^m. 
       }
       We have two difficulties with this prior. The first is that the prior depends implicitly on $n$ through $p$. But, since we consider $p$ as almost fixed, we do not see as crucial. More bothering is the fact that the probability decreases too fast to 0, and, by the prior, the model size is not likely to be larger than its a priori minimum. The  purpose  is applying this super-sparse prior to a model that although is sparse, it is not too sparse, $|\scm(\beta)|\approx n^{\al}$, say, probably even $n/\log(n)$. The reason is that the the number of models of size $m$ increases super-exponentially fast in $m$ and the prior is set to suppress this growth and prevent fitting a too large and unrelated model to the data.  The prior does not describe  the statistician's beliefs about the model.\\
       
       The prior of the coordinates' values is having them \iid double exponential with scale parameter $\sqrt n/p\le \lm\le c\sqrt{n\log p}$. The upper bound is that used by the lasso which set its value  to ensure the sparsity of the model. This is not needed now, since the sparsity of the model is ensured directly by the other component of the prior. In fact, it is not obvious why $\lm$ cannot set to 0. Again, we claim that the suggested prior does not express the beliefs of the researcher. Consider the case that the sparsity the researcher is going to assume in his (e.g., frequentist) estimation is  $\scm(\beta)\approx n^{\al}$.  If he  considers $\lm=\lm_0$ which does not depend on $n$, he essentially believes that $\sum_{j=1}^{p}\beta_j^2 = 2\lm_0^{-2} n^{\al}\bigl(1\pm 5n^{-\al/2} \bigr)$. However, he also believe that $\var (Y)$, and therefore $\sum_{j=1}^{p}\beta_j^2$ is of order 1. Consider empirical Bayes approach or hierarchical prior will not solve the difficulty as long as it is assumed that $\lm_0$ is of order 1 and it is unknown constant that does not depend on $n$.   On the other hand, if he takes $\lm=\lm_0 n^{\al/2}$,  his prior depends on the sample size and thus does not express an a priori belief which was valid before the sample was taken.
       
       There is another difficulty in the standard analysis. It is based on the compatibility assumption of \cite{buhlmann2011statistics}. A condition like that, or any of the similar conditions in the literature is necessary for to establish a good behavior of the lasso and for the estimation of $\beta$. However, it is an unverifiable condition and it is not necessary for an estimation of the prediction vector $(\beta\t X_1,\dots,\beta\t X_n)\t$. See Technical Discussion \ref{Technical:wocompat}. 
     \end{Technical}
     \vspace{1ex}

     \begin{Technical}{Prediction Without Compatibility}{wocompat}
     \tcbpar
     It is interesting to consider the ultra-dimension linear regression model but with no restrictions on the empirical correlation between the different confounders, as done in  \cite{RigolletTsybakov2012}:
      \eqsplit{
        Y_i &= \beta\t X_i+\eps_i,\quad \eps_i\dist \scn(0,\sig^2). 
       }
      Since they do not have any restriction on the confounders, the target of the estimation cannot be $\beta$ itself, but only the prediction error, $\sum_{i=1}^{n}(Y_i-\hat Y_i)^2$, where $\hat Y_i = \beta\t X_i$. For each sparse $\beta$ we consider the Stein unbiased estimator of the risk: 
       \eqsplit{
        R_{un}(\beta) &= \sum_{i=1}^{n}(Y_i-\beta\t X_i)^2 + 2\sig^2 |\scm(\beta)|- \sig^2,  
        }
        where we used the same sparsity notation as in Technical Discussion \ref{Technical:uhdModelSelection}. They considered Bayes procedures with respect to a prior $\uppi$:  
         \eqsplit{
            \hat Y &= \sum_\beta \frac{e^{-\gamma R_{ub}(\beta)}\uppi(\beta)}{\sum_{\ti\beta}e^{-\gamma R_{ub}(\ti\beta)}\uppi(\ti\beta)}X\beta, 
          }
        where, as usual $\hat Y=(\hat Y_i,\dots,\hat Y_p)\t$ and $X=(X_1\t,\dots,X_n\t)\t$.  The prior $\uppi$ is similar to the prior considered in Technical Discussion \ref{Technical:uhdModelSelection}. It suffers from the inadequacies discussed there. But even if the prior is proper, this is not really a Bayes procedure, because of the value of $\gamma$. Their result depends on  $\gamma$ being not larger than $1/(4\sigma^2)$, half the value needed to make the factor before the prior being proportional to the likelihood.  Thus, this cannot be considered even as a formal Bayesian procedure. 
        
         From a Bayesian point-of-view the compatibility assumption has a major impact on the feasibility of a good prior on $\beta$ for the prediction problem.  The natural assumption is that $\boldsymbol{X}\beta$ is in a ball, where $\boldsymbol{X}$ is the design matrix. With the compatibility this translates to $\beta$ concentrated in compact sets. However, without compatibility, the assumption that $\boldsymbol{X}\beta$ is concentrated does not imply anything about the a priori distribution of $\beta$. 
         The difficult situation is when the effect is close to the level of detection: $\|\boldsymbol{X}\beta\|^2=2\sig^2\scm(\beta)\log(p)$. The vast amount of wrong models, of order $(pe/\scm(\beta))^{\scm(\beta)}$ balances their lower likelihood, and therefore the log-likelihood should be scaled up (by $\gamma$). 
     \end{Technical}
  \vspace{1ex}
   
\end{example}

\subsection{Plug-in estimation}\label{sec:pie}

When considering the estimation of a linear functional of an unknown function under quadratic loss function, the Bayesian estimator is a plug-in type: $\E L(f)=L(\E f)$. This means that the S-Bayesian does not have the option of estimating directly $L(f)$ without estimating $f$, and does not have the option of estimating $f$ differently for different functionals.

\begin{example}\label{ex:intF2}\addtocounter{nuexamples}{1}
Consider estimating $\th\eqdef\int f^2(x)dx$ where $f$ is the common density of $n$ \iid observations. For simplicity, we assume that $f\in\scf_\al$, where $\scf_\al$ is the set of all densities  supported on the unit interval,  bounded away from 0 and can be rough but there is $c<\en$ such that for any $x,x+h\in(0,1)$: $|f(x+h)-f(x)|<c|h|^{\al}$. We consider the case $1/4<\al<1/2$. It was a surprise that $\th$ can be estimated on the $\sqrt n$ rate. After all, $f^2(x)$ at a given point $x$ can be estimated at a nonparametric rate, when the bias and variance are balanced, and thus, a plug-in estimator of any reasonable estimator is not expected to yield the parametric rate. The semiparametric analysis of this model can be found, for example, in \cite{RitovBickel1990,BickelRitov1988,CaiLow2005,donoho1991geometrizing,  birge1995estimation,EfromovichLow1996}. For completeness we give an analysis of a simple estimator.

The efficient procedure can be described schematically as follows. We start with an initial estimator $\ti f(x)$ of the density. It can be based on a sub-sample. We use the expension
 \eqsplit[sq0]{
    \int f^2(x)dx &=   2\int \ti f(x) f(x)dx  -\int \ti f^2(x)dx + \int \bigl(f(x)-\ti f(x)\bigr)^2 dx 
  }
The first term is estimated easily by $2n^{-1}\sum_{i=1}^{n}\ti f(X_i)$. The second term can be calculated easily, and, hopefully, the third term is small and can be ignored. Here is a detailed simple construction. The asymptotic distribution of the efficient estimator is given in \eqref{sqeff} below. We start with dividing the sample into two equal random sub-samples and consider the histogram  estimators of the density, $\ti f_1(x),\ti f_2(x)$ with $M=M_n\to\en$ bins with equal width, where
 \eqsplit[sqRange]{
    n^{1/4\al}\ll M_n\ll n.
  }
  
Denote the value of the histograms by $\ti f_{jm}$, $j=1,2$, $m=1,\dots,M$  It is then considers 
 \eqsplit{
    \hat\th &\eqdef \frac{1}{M}\sum_{m=1}^{M} (2\ti f_{1m}\ti f_{2m}-\frac12\ti f_{1m}^2-\frac12\ti f_{2m}^2) + \frac{M}{n}.
  }   
 To justify the estimator write $\ti f_{jm}=\bar f_m+\del_{jm}$ where $\bar f_m$ is the mean true density in the $m$th interval. Then  
  \eqsplit[sq1]{
    \hat\th &= \frac{1}{M} \sum_{m=1}^{M}\bar f_j^2 + \frac1M\sum_{m=1}^{M}\bar f_j(\del_{1m}+\del_{2m}) + \frac{1}{M}\sum_{m=1}^{M}(2\del_{1m}\del_{2m}-
    \frac12\del_{1m}^2 -\frac12\del_{2m}^2) - \frac M{2n}
   }
  The first term in \eqref{sq1} is deterministic, let $\bar f$ be the histogram approximation of the density
   \eqsplit[sq1a]{
    \frac{1}{M} \sum_{m=1}^{M}\bar f_j^2 &= \int \bar f^2(x) dx 
    \\
    &= \int f^2(x)dx - 2\int \bar f(x)\bigl( f(x)-\bar f(x)\bigr)dx -\int \bigl(f(x)-\bar f(x)\bigr)^2dx  
       \\
    &= \int f^2(x)dx  -\int \bigl(f(x)-\bar f(x)\bigr)^2dx 
    \\
    &= \th+O({M^{-2\al}})=\th+ o(n^{-1/2}).  
 }
The $\del_{jm}$ are mean 0, asymptotically uncorrelated, and 
\eqsplit[sq2]{
     \frac{ n}{2M\bar f_j} \E \del_{jm}^2 \to 1.
     }
      Thus, the second term of \eqref{sq1} satisfies:     
 \eqsplit[sq1b]{
     \frac{\sqrt{n}}M\sum_{m=1}^{M}\bar f_j(\del_{1m}+\del_{2m})=\frac{\sqrt{n}}M\sum_{m=1}^{M}(\bar f_j-\th)(\del_{1m}+\del_{2m})\cid N\Bigl(0, 4\var\bigl(f(X)\bigr)\Bigr).
  }
  Finally, we consider the third and fourth terms in \eqref{sq1}. We use the fact that $M^{-1}\sum_{m=1}^{M}\bar f_m=1$, and write:
   \eqsplit[sq1c]{
    \frac{1}{M}\sum_{m=1}^{M}\bigl(2\del_{1m}\del_{2m}-
    \frac12\del_{1m}^2 -\frac12\del_{2m}^2-\bar f_j \frac M{2n}\bigr) = \OP({\frac {M^{1/2}}{n}}) =\op({n^{-1/2}}) ,
    }
   since by \eqref{sq2}, this is a sum of asymptotically uncorrelated and mean zero terms, where the terms are $\OP({M/n})$. It follows that from \eqref{sq1}, \eqref{sq1a}, \eqref{sq1b}, and \eqref{sq1c}:     
    \eqsplit[sqeff]{
        \sqrt{n}(\hat\th- \th)\cid N\Bigl(0, 4\var\bigl(f(X)\bigr)\Bigr),
     }
   which is the semiparametric efficiency bound, \cite{RitovBickel1990}.
     
It is interesting to note that the permitted range of $M_n$, as given in \eqref{sqRange}, does not include the bandwidth of an estimator with minimax rate of converges of the density at all, $n^{1/(1+2\al)}$---the frequentist has no difficulties with having a different estimator for estimating the density and for estimating a functional of it.

The (S-)Bayesian has no much of choice. For him, $f(x)$ is a random variable, and under quadratic loss function he should compute 
 \eqsplit{
    \E \int f^2(x)dx &= \int \bigl(\E f(x)\bigr)^2 dx + \int \var\bigl(f(x)\bigr)dx.
  }
 Thus, she should compute her density estimate $\E f(x)$ and inflate it by its variance estimate.  Note that if $\al>1/2$, the a posteriori variance of a good prior is $\op({n^{-1/2}})$, and thus the correction is negligible.  However, we are interested in the regime were $1/4<\al<1/2$, and the correction is essential. In fact, the minimax estimator of $f$ has mean square error (MSE) of order  $n^{-2\al/(1+2\al)}\gg n^{-1/2}$. Thus, $\int \bigl(\E f(x)\bigr)^2dx$ has estimation error that is of a bigger order than that of the frequentist, so the exact correction is essential.

But the integral is a linear operator and hence the Bayesian has the plug-in problem: he has to use $\hat\th = \int \widehat{\{f^2(x)\}}dx$. But, any good estimator of $f^2(x)$ is similar to a kernel estimator with bandwidth $\approx n^{-1/(1+2\al)}$, and has too much error. The frequentist overcomes this difficulty by introducing the term $M/n$ which indeed increases the error, but makes the estimator unbiased.  This option does not directly exist within the Bayesian paradigm. Whenever there is a statistical error, the Bayes estimator is necessarily biased, even under the assumed distribution. See Technical Note \ref{Technical:biasedBayes}.
 
 \vspace{1ex}
 \begin{Technical}{Bayes estimators are biased}{biasedBayes}
 \tcbpar
 \begin{theorem}
   Suppose $(\th,X)$ have some joint distribution (i.e., the marginal of $\th$ is the prior and the distribution of $X$ given $\th$ is the statistical model). Let the Bayes estimator under quadratic loss  be $\hat\th(X)\eqdef \E(\th\mid X)$. Then either $\hat\th=\th$ w.p. 1 or $\E(\hat \th \mid \th)\ne \th$.
 \end{theorem}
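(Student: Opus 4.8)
The plan is a short $L^2$ argument, so let me first fix the setting: assume the second moments $\E\th^2$ and $\E\hat\th^2$ are finite (this is implicit once a quadratic loss is in force), so that $\hat\th=\E(\th\mid X)$ is precisely the orthogonal projection of $\th$ onto the subspace $L^2(\sigma(X))$ of square-integrable functions of the data. Aiming at the stated dichotomy, I would suppose that the Bayes estimator \emph{is} frequentist-unbiased, i.e. $\E(\hat\th\mid\th)=\th$ almost surely, and show that this already forces $\hat\th=\th$ a.s.; if no such equality holds we are in the other alternative, $\E(\hat\th\mid\th)\ne\th$. The entire proof is a computation of the single cross term $\E(\th\hat\th)$ in two different ways.

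Conditioning on $X$: since $\hat\th$ is $\sigma(X)$-measurable, $\E(\th\hat\th\mid X)=\hat\th\,\E(\th\mid X)=\hat\th^{\,2}$, so $\E(\th\hat\th)=\E(\hat\th^{\,2})$. Conditioning on $\th$ and using the assumed unbiasedness: $\E(\th\hat\th\mid\th)=\th\,\E(\hat\th\mid\th)=\th^2$, so $\E(\th\hat\th)=\E(\th^2)$. Hence $\E(\hat\th^{\,2})=\E(\th^2)=\E(\th\hat\th)$, and therefore
\[
\E\bigl(\th-\hat\th\bigr)^2=\E\th^2-2\E(\th\hat\th)+\E\hat\th^{\,2}=0,
\]
so $\th=\hat\th$ with probability one. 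One can say the same thing geometrically: the projection property gives $\th-\hat\th\perp g(X)$ for every $g$, in particular for $g(X)=\hat\th$; unbiasedness gives $\hat\th-\th=\hat\th-\E(\hat\th\mid\th)\perp h(\th)$ for every $h$, in particular for $h(\th)=\th$; these two orthogonality relations are exactly the two evaluations of $\E(\th\hat\th)$ above, and their difference is the Pythagorean identity that kills $\E(\th-\hat\th)^2$.

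There is no substantive obstacle here; the only point to watch is integrability — one needs $\th,\hat\th\in L^2$ so that the cross term and both conditional expectations are legitimate and the cancellation is valid. If the prior has infinite second moment the statement has to be read with that caveat (or localized), but in that regime the quadratic-risk Bayes estimator is itself ill-defined, so the hypothesis already supplies what is needed.
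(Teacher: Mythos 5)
Your proof is correct, and it reaches the conclusion by a somewhat more direct route than the paper. You evaluate the single cross moment $\E(\th\hat\th)$ twice---conditioning on $X$ to get $\E(\th\hat\th)=\E\hat\th^2$, and conditioning on $\th$ under the assumed unbiasedness to get $\E(\th\hat\th)=\E\th^2$---and then the Pythagorean cancellation gives $\E(\th-\hat\th)^2=0$ outright, with no case analysis. The paper instead conditions on $\hat\th$, first noting $\E(\th\mid\hat\th)=\hat\th$ by the tower property, and then applies the law of total variance in both directions: $\var(\th)=\var(\hat\th)+\E\var(\th\mid\hat\th)$ and, under unbiasedness, $\var(\hat\th)=\var(\th)+\E\var(\hat\th\mid\th)$, so the two strict inequalities contradict each other unless the conditional variances vanish, in which case $\th=\E(\th\mid\hat\th)=\hat\th$ w.p.\ 1. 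The two arguments rest on the same $L^2$ facts, but they buy slightly different things: yours is shorter and delivers the almost-sure equality in one line, while the paper's variance decomposition makes explicit the quantitative point the surrounding discussion is after, namely that a non-degenerate Bayes estimator satisfies $\var(\hat\th)<\var(\th)$, i.e.\ it shrinks, which is why it cannot be unbiased. Your integrability caveat ($\th,\hat\th\in L^2$) is the right one and is implicitly assumed in the paper as well; with infinite prior second moment the statement should indeed be read as degenerate.
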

 \begin{proof}
   Since $\hat\th = \E(\th\mid X,\hat\th)$, we have also $ \E(\th\mid\hat\th)=\E\bigl(\E(\th\mid X,\hat\th)\mid \hat\th\bigr)=\hat\th$. Thus,  $\E\hat\th =\E\th$. Assume, wlog, that $\E\hat\th=\E\th=0$. Then:
    \eqsplit{
        \var(\th) &= \E\bigl(\E(\th\mid\hat\th)\bigr)^2 + \E\var(\th\mid\hat\th)
        \\
        &=\E {\hat\th}^2+ \E\var(\th\mid\hat\th)
        \\
        &=\var(\hat\th)+\E\var(\th\mid\hat\th)>\var(\hat\th), 
     }
  unless $\E\var(\th\mid\hat\th)=0$ w.p. 1. If $\hat\th$ was an unbiased estimator, $\E(\hat\th\mid\th)=\th$, then we had similarly  that $\var(\hat\th)>\var(\th)$ which is a contradiction.  
 \end{proof}
  \end{Technical}
\vspace{1ex}

We have the following result. 
\begin{theorem}\label{th:densitySq}
  Let $\th(\gamma,w)\eqdef\int f^{2\gamma}(x)w(x)dx$ be a parameter of interest.  For any $f_0(x)\in\scf_\al$ there is a set $\scs$, a symmetric band around $f_0$, such that the rate of convergence of the Bayesian estimator is worse than $n^{\al/(1+2\al)+\eps}$ for any $\eps>0$ and for any prior on \scs that does not depend on the specific values of $\gamma$ and $w$.  
\end{theorem}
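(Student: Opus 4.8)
The plan is to pin the failure at $\gamma=\tfrac12$, where $\theta(\tfrac12,w)=\int f(x)w(x)\,dx$ is a \emph{linear} functional. Under quadratic loss the Bayes estimator of a linear functional is the pure plug-in $\E[\int fw\mid X]=\int\hat f\,w=\langle\hat f,w\rangle$, where $\hat f=\E[f\mid X]$ is the posterior-mean density; and this $\hat f$ is the \emph{same} function for every $w$ and lies in the closed convex hull of the support of the prior. So if $\scs$ is taken convex, then $\hat f\in\scs$ for \emph{every} prior on $\scs$. I would take $\scs=\scs_n$ to be a ``bump cube'' around $f_0$: partition $(0,1)$ into $M=M_n$ equal bins, let $\phi_m$ be a fixed smooth mean-zero bump supported on bin $m$ with $\|\phi_m\|_\infty=1$ (so $\|\phi_m\|_{L_2}^2\asymp1/M$, $\|\phi_m\|_{L_1}\asymp1/M$, and H\"older-$\al$ seminorm $\asymp M^\al$), and set $\scs=\{\,f_0+\sum_{m=1}^M c_m\phi_m:\ c_m\in[-\delta,\delta]\,\}$ with the \emph{critical calibration} $M_n\asymp n^{1/(1+2\al)}$ and $\delta=\delta_n=c_\delta M_n^{-\al}\asymp n^{-\al/(1+2\al)}$. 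For $c_\delta$ small: every $f\in\scs$ is a density ($\int\phi_m=0$, so $\int f=1$), stays bounded away from $0$, and has H\"older-$\al$ seminorm below the cap, hence $\scs\subset\scf_\al$; $\scs$ is convex and symmetric about $f_0$; and --- this is exactly what the calibration buys --- for the two densities $f^*_\pm$ that differ only by replacing one coordinate $c_m$ by $\pm\delta$ one has $n\,\mathrm{KL}(P_{f^*_+},P_{f^*_-})=\Theta(c_\delta^2)$ (uniformly in $m$ and in the other coordinates), whence $\|P^{\otimes n}_{f^*_+}-P^{\otimes n}_{f^*_-}\|_1\le d_0<2$ by Pinsker. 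I would then contrast any Bayes estimator with the frequentist $n^{-1}\sum_{i=1}^n w(X_i)$, which is exactly unbiased for $\theta(\tfrac12,w)$ with variance $O(n^{-1}\|w\|_\infty^2)$, hence achieves the $n^{-1/2}$ rate.

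The core step is a lemma asserting that no estimator valued in $\scs$ --- in particular no posterior mean --- is close to unbiased on $\scs$: there is a vertex $f^*\in\scs$ (all $c^*_m\in\{-\delta,\delta\}$) with $\|\E_{f^*}\hat f-f^*\|_{L_2}\ge c_1\delta_n$. This is an Assouad-type estimate. Write $\hat f=f_0+\sum_m\hat c_m\phi_m$ with $\hat c_m\in[-\delta,\delta]$ (the $\phi_m$ are orthogonal with disjoint supports, so the coefficients are genuine statistics $\hat c_m=\E[c_m\mid X]$, lying in $[-\delta,\delta]$ because $\hat f\in\scs$). Fix $m$ and a sign pattern $\sigma_{-m}$ of the remaining coordinates; the vertices $(+\delta,\sigma_{-m})$ and $(-\delta,\sigma_{-m})$ induce sample laws differing only in the $m$-th bump, so $|\E_{(+\delta,\sigma_{-m})}\hat c_m-\E_{(-\delta,\sigma_{-m})}\hat c_m|\le\delta\,\|P^{\otimes n}_{(+\delta,\sigma_{-m})}-P^{\otimes n}_{(-\delta,\sigma_{-m})}\|_1\le d_0\delta$, using $|\hat c_m|\le\delta$. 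Hence the two biases $\E_{(\pm\delta,\sigma_{-m})}\hat c_m\mp\delta$ cannot both be small: their squares sum to at least $\tfrac12(2-d_0)^2\delta^2$. Averaging over $\sigma_{-m}\in\{\pm1\}^{M-1}$ and over $m$, and using that a maximum dominates an average, produces a single vertex $f^*$ with $\sum_m(\E_{f^*}\hat c_m-c^*_m)^2\ge\tfrac14(2-d_0)^2M\delta^2$, i.e.\ $\|\E_{f^*}\hat f-f^*\|_{L_2}^2\ge c_1^2\delta^2$ with $c_1^2=\tfrac14(2-d_0)^2\,M\|\phi_m\|_{L_2}^2$. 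This uses \emph{only} boundedness of $\hat f$ --- that is the source of robustness to an arbitrary prior.

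Given such an $f^*$, write its bias $b:=\E_{f^*}\hat f-f^*=\sum_m B_m\phi_m$, so $\sum_m B_m^2\|\phi_m\|_{L_2}^2\ge c_1^2\delta^2$ and $|B_m|\le2\delta$. I would ``read it off'' with the weight $w=1+\tfrac12\sum_m\mathrm{sign}(B_m)\phi_m$, which satisfies $\tfrac12\le w\le\tfrac32$ (disjoint bumps of height $1$) and $\langle\phi_m,w\rangle=\tfrac12\mathrm{sign}(B_m)\|\phi_m\|_{L_2}^2$. Then the Bayes estimator of $\theta(\tfrac12,w)=\int fw$, namely $\langle\hat f,w\rangle$, has bias at $f^*$ equal to $\langle b,w\rangle=\tfrac12\sum_m|B_m|\|\phi_m\|_{L_2}^2\ge\tfrac12\big(\sum_m B_m^2\|\phi_m\|_{L_2}^2\big)/\max_m|B_m|\ge\tfrac{c_1^2}{4}\,\delta_n$. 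Hence its mean-squared error at $f^*\in\scs$ is at least the squared bias, of order $\delta_n^2\asymp n^{-2\al/(1+2\al)}$, i.e.\ root-mean-square error of order $n^{-\al/(1+2\al)}$ --- precisely the ``slow'', density-estimation-type rate asserted in the theorem, and polynomially worse than the frequentist's $n^{-1/2}$ since $\al<\tfrac12$; moreover the prior was fixed \emph{before} $(\gamma,w)$ was revealed, so one posterior-mean density must serve all functionals. Two clarifications would accompany this. First, $w$ must be allowed to oscillate at the bin scale: against a smooth $w$ one has $|\langle\phi_m,w\rangle|=O(M^{-2})$, so the plug-in bias is only $O(\delta_n/M_n)=O(n^{-1/2})$ and the Bayesian is fine --- the damage is genuinely functional-dependent. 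Second, for $\gamma$ near $1$ (a quadratic functional) there is an additional obstruction: the Bayesian's variance-corrected plug-in $\int(\hat f^2+\var(f\mid X))w$ still cannot be debiased the way the frequentist's ``$+M/n$'' term debiases $\int\hat f^2$, by the biased-Bayes theorem (Technical Note \ref{Technical:biasedBayes}) --- but this is not needed for the stated rate.

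The step I expect to be the main obstacle is the core lemma: extracting a \emph{single} bad $f^*\in\scs$ from the coordinatewise corner estimates (the averaging over the $2^{M-1}$ sign patterns and the passage to a maximum), and --- logically prior to that --- getting the calibration $M_n\asymp n^{1/(1+2\al)}$ exactly right so that $d_0<2$ strictly. If $M_n$ were smaller the per-coordinate data becomes informative, $d_0\to2$, and the bias bound degrades to $o(\delta_n)$; if $M_n$ is too large one leaves $\scf_\al$. Everything else --- the plug-in identity for linear functionals, the construction of $w$, and the $n^{-1/2}$ behaviour of the empirical average --- is routine.
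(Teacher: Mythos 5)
Your proposal is correct, but it takes a genuinely different route from the paper. The paper fixes the witness functional in advance---$\gamma=1$, $w\equiv 1$, i.e.\ $\int f^2$---and builds $\scs$ from sign perturbations $f(x)=f_0(x)+\sum_k \xi_k K^{-\al}\tfrac{\pi}{2}\sin\bigl(\pi(Kx-k)\bigr)\ind(k-1<Kx\le k)$ with $\xi_k\in\{-1,1\}$ (half of them $+1$ so that $f$ integrates to one) and $K^{1+2\al}\gg n$; it then argues that the honest subjective prior is the exchangeable one, applies Lemma \ref{lem:twopoints} bin by bin to the histogram counts to show that each $\uppi(\xi_k=1\mid N_k)$ stays within $o(1)$ of $1/2$, so the Bayes estimate essentially equals the prior value $\int f_0^2+\pi/(8K^{2\al})$ and misses the linear cross term $2K^{-\al}\sum_k\xi_k\int_{\mathrm{bin}_k} f_0\cdot\tfrac{\pi}{2}\sin$, which an adversarial sign arrangement makes of order $K^{-\al}\approx n^{-\al/(1+2\al)}$ (up to the slowly growing $L_n$). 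You instead work at $\gamma=\tfrac12$, use mean-zero bumps and a full convex cube (which neatly avoids the paper's ``exactly half $+1$'' constraint), trap the posterior-mean density in $\scs$, run an Assouad argument to force an $L_2$ bias of order $\delta_n$ at some vertex, and read that bias off with a weight aligned with its signs; your coordinatewise corner estimates, the averaging over sign patterns, and the KL/Pinsker calibration at $M_n\asymp n^{1/(1+2\al)}$ are all sound. What each approach buys: yours covers literally every prior supported on $\scs$ (the paper's proof really only treats the exchangeable prior, which it asserts is the only subjective option) and needs no posterior computation at all; the paper's exhibits the failure at the single, canonical, $n$-free functional $\int f^2\,dx$ that motivates the section, whereas your witness functional has a weight that oscillates at the bin scale and is chosen after, and as a function of, the prior---and, as you concede, for $\gamma=\tfrac12$ with a fixed smooth $w$ the Bayesian plug-in is fine. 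So your argument proves the theorem under the reading suggested by its final clause (one prior, fixed before $(\gamma,w)$, must serve all functionals), but it does not show failure for any pre-specified $(\gamma,w)$ such as the quadratic functional with $w\equiv1$; for that reading the paper's mechanism (posterior on the signs pinned near the prior, bias accumulating through the convex map $f\mapsto f^2$), or an extension of your bounded-posterior-mean argument to that nonlinear functional, would still be needed.
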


The proof can be found in Appendix \ref{app:proofs}. The set $\scs$ is built in the following way. The  $[0,1]$ interval is divided into $K$ bins, and the the density is of the form $f_0(x)+\sum_{k=0}^{K-1}\xi_k g(x-k/K)$, where $\xi_i\in\{-1,1\}$ and $g$ is a given function supported on $[0,1/K]$. The difficulty is that any $\xi_i$ is estimated using only an order $n/K$ observations found in the $i$tn bin, and thus  the estimation of $\xi_i$ is necessarily biased. The plug-in property enforces plugging in these bias estimates into a convex function, and thus accumulating bias.

Can the Bayesian modified his prior and achieve the semiparametric bound? Of course he can. However, the needed formal prior would reflect his silent acceptance of the semiparametric analysis of the problem and not his a priori uncertainties. The prior would be formal to yield a good estimator, and would depend on the parameter estimated. Thus he would use different priors for estimating $\int f^{2\gamma}(x)w(x)dx$ depending on $\gamma$ and $w(\cdot)$---priors that depend on the functional and ignore his subjective beliefs on the underlined model \scs.  

\vspace{1ex}   
\begin{Technical}{Two close points}{twopoints}
\tcbpar
\begin{lemma}\label{lem:twopoints}
Consider a two points prior $\uppi( M=\mu)=\uppi( M=-\mu)=1/2$. Suppose $X\mid M\dist \scn( M,\sig^2)$, where $\mu\ll\sig$. Then
 \eqsplit{
    \uppi( M=\mu\mid X) &= \frac12+\frac{\mu}{2\sig^2}X+\OP({\frac{\mu^2}{\sig^2}}). 
  }
  
\end{lemma}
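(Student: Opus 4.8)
The plan is an exact Bayes computation followed by a one–variable Taylor expansion. Writing $\phi_\sig$ for the $\scn(0,\sig^2)$ density, Bayes' rule gives
\[
  \uppi(M=\mu\mid X)=\frac{\tfrac12\phi_\sig(X-\mu)}{\tfrac12\phi_\sig(X-\mu)+\tfrac12\phi_\sig(X+\mu)}.
\]
Dividing numerator and denominator by $\phi_\sig(X-\mu)$ and using $(X+\mu)^2-(X-\mu)^2=4X\mu$, the right–hand side collapses to the logistic form
\[
  \uppi(M=\mu\mid X)=\frac{1}{1+e^{-2X\mu/\sig^2}}=g\!\left(\frac{2X\mu}{\sig^2}\right),\qquad g(t):=\frac{1}{1+e^{-t}}.
\]
So the statement reduces to expanding $g$ near $0$, once we know that its argument is small.

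To that end, note that under the model $X$ is marginally a $1/2$–$1/2$ mixture of $\scn(\mu,\sig^2)$ and $\scn(-\mu,\sig^2)$, so $X=\pm\mu+\sig Z$ with $Z\dist\scn(0,1)$; since $\mu\ll\sig$ this gives $X=O_P(\sig)$ and hence $t:=2X\mu/\sig^2=O_P(\mu/\sig)\to 0$. Now $g$ is smooth with $g(0)=\tfrac12$, $g'(0)=\tfrac14$, and all derivatives bounded on $\R$ (indeed $g''=g'(1-2g)$ gives $\sup_{\R}\lvert g''\rvert\le\tfrac14$), so a first–order Taylor expansion with Lagrange remainder yields $g(t)=\tfrac12+\tfrac14 t+\tfrac12 g''(\xi_t)\,t^2$ for some $\xi_t$ between $0$ and $t$, with $\lvert\tfrac12 g''(\xi_t)t^2\rvert\le\tfrac12\bigl(\sup_{\R}\lvert g''\rvert\bigr)t^2$. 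Substituting $t=2X\mu/\sig^2$, so that $\tfrac14 t=\tfrac{\mu}{2\sig^2}X$ and $t^2=O_P(\mu^2/\sig^2)$, gives
\[
  \uppi(M=\mu\mid X)=\frac12+\frac{\mu}{2\sig^2}X+O_P\!\left(\frac{\mu^2}{\sig^2}\right),
\]
which is exactly the claimed identity.

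There is essentially no obstacle here; the only point worth stating carefully is the passage from the deterministic remainder bound to the stochastic $O_P(\mu^2/\sig^2)$ rate, which is immediate from $X=O_P(\sig)$ together with the uniform boundedness of $g''$ on the line. (If one instead exploits the symmetry $g(t)+g(-t)=1$, which forces $g''(0)=0$, the same argument upgrades the error to $O_P\bigl((\mu/\sig)^3\bigr)=o_P(\mu^2/\sig^2)$; but the weaker bound in the statement already follows from the first–order expansion alone, so I would present only that.)
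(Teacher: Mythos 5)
Your proof is correct and follows essentially the same route as the paper: write the posterior via Bayes' rule, cancel the common factors to obtain the logistic form in $X\mu/\sig^2$, and Taylor-expand using $X=\OP(\sig)$. The only difference is that you make the remainder control explicit (Lagrange form, bounded $g''$), which the paper leaves implicit.
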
 
\begin{proof}
    \eqsplit{
        \uppi( M=\mu\mid X) &=\frac{\frac12 e^{-\frac{(X-\mu)^2}{2\sig^2}}}{\frac12 e^{-\frac{(X-\mu)^2}{2\sig^2}}+\frac12 e^{-\frac{(X+\mu)^2}{2\sig^2}}}
        \\
        &=\frac{e^{\frac{X\mu}{\sig^2}}}{e^{\frac{X\mu}{\sig^2}}+e^{\frac{-X\mu}{\sig^2}}}
        \\
        &=\frac12+ \frac{X\mu}{2\sig^2} + \OP({\frac{\mu^2}{\sig^2}}). 
     }
     
\end{proof}
\end{Technical}  
\vspace{1ex}

\end{example}
\subsection{What didn't happen}\label{sec:counterfactional}

One of the Bayesian claim is, we should not care about data that was not observed. However, they do care about parameters which are not relevant to the truth.

\begin{example}[Inference without compatibility]\addtocounter{nuexamples}{1}
  We consider the ultra-high dimension version of the partial linear model, which is the estimation of a low dimensional subset of the regression coefficients. This model was introduced and analyzed from a standard frequentist point of view as discussed by different authors, \cite{ZhangZhang, Zuerich, JavanmardMontanari,CaiGuo} with the compatibility or alike assumption, and in \cite{LawRitovInference} without any of these restrictions on the design matrix. The standard model is 
   \eqsplit[iwc0]{
    Y&= \beta X+ \gamma\t W + \eps, 
    }
  where $\eps$ is independent of $(X,W)$, $W,\gamma\in\R^{p}$, $\beta,X\in\R$, and $\beta$ is the parameter of interest. This model is equivalent to the model:  
   \eqsplit[iwc]{
    X &= \phi\t W+\zeta,
    \\
    Y&=  \psi\t W+\xi,
    \\
    \beta &= \frac{\cov(\zeta,\xi)}{\var(\zeta)},    
    }
   where $(\zeta,\xi)$ are independent of $W$  and $\psi=\gamma+\beta\phi$. We note that the matrix $\boldsymbol{W}=(W_1,\dots,W_n)\t$ is typically ill-defined.  The model introduces a few challenges to the Bayesian. 
   
   \cite{LawRitovInference} considered the following estimator. For a given (large) $m$:
    \eqsplit{
        \boldsymbol{\hat\zeta} &=  \displaystyle\sum_{|\scm|=m} \frac{\exp(-\frac1{\alpha}\|\boldsymbol{X}-\boldsymbol{W}\phi_\scm\|^2)} {\sum_{|\ti\scm|=m}\exp(-\frac1{\alpha}\|\boldsymbol{X}-\boldsymbol{W}\ti\phi_{\ti\scm}\|^2)} \bigl(\boldsymbol{X}-\boldsymbol{W}\phi_\scm\bigr), 
        \\
        \boldsymbol{\hat\xi} &=  \displaystyle\sum_{|\scm|=m} \frac{\exp(-\frac1{\alpha}\|\boldsymbol{Y}-\boldsymbol{W}\psi_\scm\|^2)} {\sum_{|\ti\scm|=m}\exp(-\frac1{\alpha}\|\boldsymbol{Y}-\boldsymbol{W}\psi_{\ti\scm}\|^2)}
        \bigl(\boldsymbol{Y}-\boldsymbol{W}\phi_\scm\bigr),
        \\
        \hat\beta &=\frac{\sum_{i=1}^{n}\hat\zeta_i\hat\xi_i} {\sum_{i=1}^{n}\hat\zeta_i^2},
     }
    where bold denotes the vectorized version of the sample, e.g., $\boldsymbol {X}=(X_1,\dots,X_n)\t$ and $\boldsymbol{W}=(W_1\t,\dots,W_n\t)\t$. The set  $\scm\subseteq\{1,\dots,p\}$, and  $\phi_\scm$ and $\psi_\scm$ are the least square estimate corresponding to the models where all coefficients not in $\scm$ are set to zero.

    In words, for each model of size $m$, we fit a simple linear regressions of $Y$ annd $X$ on $W$, and weighted them inversely proportional to their residual sum-of-squares.  \cite{LawRitovInference} proved that under some regularity conditions the scaled estimation error, $\sqrt{n}(\hat\beta-\beta)$,     is asymptotically normal under some regularity conditions. The two main assumptions are, firstly,  the linear model \eqref{iwc} can be fitted with a sparse model of cardinality no larger than $\sqrt{n}/\log(p)$, and secondly, $\al>4\max\bigl(\var(\zeta),\var(\xi)\bigr)$. The latter condition is not consistent with a simple Bayesian interpretation, as \cite{LawRitovInference} point out.

    Sparse models, which are the bread and butter of the current big data statistics, are very problematic. A major difficulty is that the tail of the regression coefficient may be tiny, much below any detection (of order as small $1/\sqrt{p}$ while $p\gg n$) and yet explain totally the dependent variable. Since it is below detection, using the results of any statistical procedure as a description of the reality is based totally on an unverifiable belief.  The cautious frequentist has no real problem. The claim isn't that the estimator is approximating the theoretical best predictor, but only that it approximates the best predictor based on a model of a size $m$. Cf., \cite{GreenshteinRitov2004, RigolletTsybakov2012}, while the underlined assumption is only that there is a sparse model that give a better than nothing predictor.  When the target is estimating a single coefficient (or, more generally, a few of them) like $\beta$  in \eqref{iwc0} or \eqref{iwc} the issue is more profound. First, the aim is to achieve the $\sqrt{n}$ rate, which put a further limit on the largest model that can be dealt by (to no more than $\sqrt{n}/\log(p)$ or so while the limit is $n/\log(p)$ in the nonparametric problem). Second, the scientific goal of inference, to test whether $X$ has a direct impact on $Y$ controlling for $W$ is not achieved. At most, one can honestly claim that $X$ has direct effect after linearly controlling for any very small subset of the confounders. Third, the scientist may hope to be able to make a statement about causality, but this is not really possible. The S-Bayesian has the further challenges. For example, his prior expresses a priori beliefs and hence should be the same whether the task is estimating $\gamma$ or $\beta$ in the model \eqref{iwc0}, although, for example, the optimal frequentist sparsity assumptions must be different.
    
    Consider the simple case where the coordiantes of $W$ are statistically independent. The needed sparsity assumption are strong. Let:
     \eqsplit{
        \rho_\psi(m) &= \inf_{|\scm|<m}\E(Y-\psi_\scm\t W), \quad m=1,\dots,p,\quad \rho_\psi^c(m)=\rho_\psi(p)-\rho_\psi(m) 
        \\
        \rho_\phi(m) &= \inf_{|\scm|<m}\E(X-\phi_\scm\t W), \quad m=1,\dots,p,\quad \rho_\phi^c(m)=\rho_\phi(p)-\rho_\phi(m) 
      }
    When a Bayesian is facing the model \eqref{iwc}, he should believe in two things. First that the $X$ and $Y$ can be explained by sparse model, and the sparsity should be strictly smaller than $\sqrt{n}$ (otherwise, the estimation error of the two residuals would prevent estimating $\beta$ on the $\sqrt{n}$ rate). Second, that the residuals from these two regression when consider as Euclidean vectors are each of order smaller than $n^{-1/4}$ (and thus, the correlation between the residuals will be in order smaller than $n^{-1/2}$). It makes sense that he may believe that $X$ can be explained by $W$ in a way that satisfies these two conditions. It may sense that he believes that the regression of $Y$ on $W$ has these properties. Suppose he has no reason to assume that there is any relation between the way that $W$ explains $X$ and the way it explains $Y$. This is what he believes. We cannot argue about that. Unfortunately, this is not enough. Suppose $X$ and $Y$ can be explained with models of size $m=n^{-1/3}$. Even if the component of $W$ are statistically independent, the coefficients of the model can be all of order $m^{-1/2}$. However, suppose these models do not overlap. The coefficients of $\psi$ corresponding to those in the active variable in the model of size $m$ of $\phi$ are all of order $n^{-7/12} $, much below the detection level. There contribution to correlation between the estimated residuals is $m\times m^{-1/2}\times n^{-7/12}=n^{-5/12}\gg n^{-1/2}$.    
    
    Can the S-Bayesian ``correct'' his prior? As in most examples, the answer is yes, but with the price of stop following the S-Bayesian paradigm. If you consider the sampling procedure, evaluate the statistical properties of different estimators, consider which estimator is consistent and robust, and then look for a prior that generates this prior, you may be a good statistician, but you are not a Bayesian.

\end{example}

\vspace{2ex}

\appendix
\vspace{2ex}
\noindent\textbf{\Large Appendix}
\section{A detailed analyss of Example \ref{ex:CODA}}\label{ex:CODAdetails}

\textbf{Example \ref{ex:CODA} (details).} We try now to examplify that the rational prior probability does not describes world-view but is defined by the best (frequentist) performance of the resulting decision procedure. 
  
  We start with a simple model. There are $n$ independent normal random variables: $Y_i\dist \scn(p_i,1)$. The number $n$ is very large. The means are unconnected and exchangeable. The first target is to estimate the $p_i$s. There are no much options here. The Bayesian statistician has a prior on $p_i$. The $p_i$ are independent under the prior and, therefore, under the posterior distribution, and the estimator would be $\hat p_i=\hat p(Y_i)$, $i=1,\dots,n$. For the frequentist, this is the standard compound decision  (or empirical Bayes) problem. The efficient frequentist considers the $p_i$ \emph{as if} they come from \iid sample (it follows a mathematical argument that this is the right thing to do and the decision maker's beliefs are irrelevant.). He would then used the estimator $\hat p_i=\hat p(Y_i\mid Y_1,\dots,Y_n)$, where the estimation procedure starts with the estimation of the Bayes procedure relative to the (estimated) empirical distribution of $p_1,\dots,p_n$,  cf. \cite{efron2019bayes, SahaGunt20,ritov2024no}.   This would puts the naive Bayesian in a clear disadvantage relative to the empirical Bayesian (who is not a Bayesian!). A way out for the S-Bayesian is to admit that by posing an exact prior on the $p_i$ he goes beyond his actual knowledge, and to use a hierarchial prior which considers the $p_i$ as \iid sampled from $\uppi_p$, and put his real prior on $\uppi_p$. 
  
  In the rest of this example we consider Bernoulli random variables $Y_i\dist \scb(1,p_i)$, $i=1,\dots,n$. Since sample from a mixture of Bernoulli random variables looks exactly like a sample from a Bernoulli distribution,  the Bernoulli model is simple for the frequentist, and he had very few real options. It raises, however, some difficulties for the Bayesian. For simplicity, let's assume that under his prior, each $p_i$ has a beta distribution with shape parameters $\alpha$ and $\beta$. Under the hierarchial model the Bayesian have a prior on $\alpha$ and $\beta$, cf. \cite{li2010bayesian}. For more simplicity, we write $\alpha=\tau\rho$ and $\beta=\tau(1-\rho)$, and assume some prior distribution on $(\tau,\rho)$. The joint distribution of the parameters and the observations is therefore:
   \eqsplit{
    \propto \uppi_\rho(\rho)\uppi_{\tau\mid \rho}(\tau\mid\rho) \prod_{i=1}^{n} \Biggl[\frac{\Gamma\bigl(\tau\rho\bigr)\Gamma\bigl(\tau(1-\rho)\bigr)} {\Gamma(t)} p_i^{\tau\rho-1+ Y_i}(1-p_i)^{\tau(1-\rho)-Y_i}\Biggr].
    }
   That is,  the model defines the joint distribution of the $p_i$ and the $Y_i$, as if the unobserved random effects $p_i$ are sampled from $\uppi_{p\mid \tau,\rho}$, and given $p_1,\dots,p_n$, $Y_1,\dots,Y_n$ are independent Bernoulli, $Y_i\dist \scb(1,p_i)$. The Bayesian has a prior $\uppi_\rho\uppi_{\tau\mid \rho}$ on the parameters of the beta distribution from which the $p_i$s are sampled.    Integrating out the unobserved $p_i$s, we obtain that the a posteriori distribution
    \eqsplit{
        \propto \uppi_\rho(\rho)\uppi_{\tau\mid \rho}(\tau\mid\rho)\rho^{\sum_{i=1}^{n}Y_i}(1-\rho)^{n-\sum_{i=1}^{n}Y_i} \approx \uppi_\rho(\hat p) \uppi_{\tau\mid \rho}(\tau\mid \hat p) e^{- \frac{(\rho-\hat p)^2}{2\hat p(1-\hat p)}} ,
     }
  where $\hat p=n^{-1}\sum_{i=1}^{n}Y_i $. This holds under some mild regularity conditions, which   we are mute about. Thus, as $n$ grows, the a posteriori of $\rho$ concentrate around $\hat p$, while $\tau$ remains as it was by the prior (conditional on $\rho$ having the value $\bar p$). Since, anyhow, we don't learn about the spread parameter $\tau$, we can assume (as we actually started) as having the value $\tau_0$. Thus, asymptotically, the Bayesian behave as if the $p_i$s are \iid from the beta distribution $Be(\tau_0\bar p, \tau_0(1-\bar p))$. The Bayes estimator is asymptotically:
   \eqsplit{
    \hat p_{bi} &= \hat p+\frac{Y_i-\hat p}{\tau_0+1}. 
    }

  But, actually we are not interested in estimating $p_1,\dots,p_n$ per se, but only their average $\bar p=n^{-1}\sum_{i=1}^{n}p_i$. This is a very simple model, and the frequentist estimator is the trivial unbiased $n^{-1}\sum_{i=1}^{n}Y_i$. This is, for example, the MLE in the empirical Bayes Gaussian problem, cf;, \cite{GreenshteinRitov22}.  The Bayesian will have consistent estimator, essentially only if he has a hierarchial prior (e.g., as above), and then he will work slightly  harder, as discussed above, but will get asymptotically equivalent estimator as $\sum_{i=1}^{n}\hat p_{bi}=\hat p$ . 
  
  The problem is more complicated when we have missing data which is in the focus of this discussion.  \cite{CODA} introduced the following example, which we now considerably extend. See its discussion in \cite{datta2025inverseprobabilityweightingsurvey}, \cite{harmeling2007bayesian}, \cite{wasserman1998asymptotic}, and \cite{wasserman2004all}.  The setup is as above: there are $n$ Bernoulli observations and the parameter of interest is $\bar p=n^{-1}\sum_{i=1}^{n}p_i$.   However, after the model was described and we summarized our model uncertainties, we are told that many of the $Y_i$s are missing: there are  some strictly positive (known and given) probabilities $w_1,\dots,w_n$ and Bernoulli  $S_i\dist \scb(1,w_i)$, $i=1,\dots,n$, independent and independent of $Y_1,\dots,Y_n$. We are told the value of the sampling weights,  and then we observe $(S_1,S_1Y_1), \dots, (S_n,S_nY_n)$, i.e., we observe $Y_i\dist \scb(1,p_i)$ if, and only if, $S_i=1$. The sampling weights $w_1,\dots,w_n$ were defined by technical considerations by a sampler who does not know $p_1,\dots,p_n$, the parameters of Nature. In fact, they are defined for a completely different survey with a complex design and they were chosen to minimize the sampling cost. of this other survery. 
  
  This example may look similar to Example \ref{ex:plm1}, however, there is an essential difference. In Example \ref{ex:plm1}, the parameter was defined locally (the parameter has the same value in every locality). In the current example, there is a single global parameter. Thus, in Example \ref{ex:plm1} the statistical issue was which sets to ignore. On the other hand, a frequentist can handle Example \ref{ex:CODA} only if the observations are missing completely at random (MCAR), which is the case when $w_1,\dots,w_n$ are known. As we will argue, the current problem is an easy one for the frequentist, but is almost impossible to the S-Bayesian (and need a difficult flexibility for any Bayesian).
  
  If it is really an MCAR situation, i.e., $w_i\equiv w$, a constant. then it is an easy situation, and then all statisticians will use, asymptotically, an estimator which is equivalent to the mean of the observed $Y_i$s. We start our discussion of the general case with the a frequentist analysis, to be followed by the Bayesian approach.
  
  Our frequentist will use the H\'{a}jek  modification of the estimator  given in \cite{HT1952}:  
   \eqsplit{
    \hat p_{HT} &=  \frac{\displaystyle\sum_{i=1}^{n}\frac{S_i}{w_i}Y_i}{\displaystyle\sum_{i=1}^{n}\frac{S_i}{w_i}}. 
    }
  The motivation is clear: the expectation of the numerator (the original Horovitz-Thompson estimator) is $\sum_{i=1}^{n}\frac{w_i}{w_i}p_i=\sum_{i=1}^{n}p_i$. Similarly, the expectation of the denominator is $n$. Dividing by an estimate of a constant positively correlated with the numerator is done in order to reduce the asymptotic variance.  Note that the division of the original Horowitz-Thompson estimator by an estimate of a constant made an unbiased estimator into a slightly biased one, which is only asymptotically unbiased. Moreover, we argue below, the modification makes the estimator semiparametrically efficient. The effective sample size is $n\bar w$, where $\bar w = \frac1n \sum_{i=1}^{n}w_i$.  Using Slutsky's Theorem 
   \eqsplit[HTinfluence]{
    \sqrt{n\bar w}\bigl(\hat p_{HT}-\bar p\bigr) &=  \frac{\displaystyle\sqrt{\frac{\bar w}{ n}}\sum_{i=1}^{n}\frac{S_i}{w_i}(Y_i-\bar p)} {\displaystyle\frac1n\sum_{i=1}^{n}\frac{S_i}{w_i}}
    \\
    &=\sqrt{\frac{\bar w}{ n} }\sum_{i=1}^{n}\frac{S_i}{w_i}(Y_i-\bar p)+R_n
    \\
    &= \sqrt{\frac{\bar w}{ n} }\sum_{i=1}^{n}\frac{S_i}{w_i}(Y_i-p_i) + \sqrt{\frac{\bar w}{ n} } \sum_{i=1}^{n}\frac{S_i}{w_i}(p_i-\bar p)+R_n
    \\
    &= \sqrt{\frac{\bar w}{ n} } \sum_{i=1}^{n}\frac{S_i}{w_i}(Y_i-p_i) + \sqrt{\frac{\bar w}{ n} } \sum_{i=1}^{n}\Bigl(\frac{S_i}{w_i}-1\Bigl)(p_i-\bar p)+R_n,
    }
  where $R_n\cip 0$. The main terms on the right hand side are sums of independent mean 0 random variables. Thus $\sqrt{n\bar w}\bigl(\hat p_{HT}-\bar p\bigr)$ is asymptotically mean 0 normal with asymptotic variance: 
   \eqsplit{
    V_{HT} &= \frac1{n}\sum_{i=1}^{n}\frac{\bar w}{w_i}p_i(1-p_i)+ \frac1{n}\sum_{i=1}^{n}\frac{\bar w(1-w_i)}{w_i}(p_i-\bar p)^2,  
    }

    A standard semiparametric analysis of this model  motivates the modified Horovitz-Thompson estimator. The analysis starts with considering $(w,p)$ as having some arbitrary distribution, $h(w)\uppi(p\mid w)\in\scp$, and consider the efficient estimation at the assumed sub-model $\scp_0$ where $\uppi(p\mid w)=\uppi(p)$. The full log-likelihood is 
      \eqsplit{
        \ell(h,\uppi\mid S,Y,w)=\log \bigl(h(w)\bigr)+ S\log w + (1-S)\log(1-w)+ S\log \int p^{SY}(1-p)^{S(1-Y)}\uppi(p\mid w)dp.
       }
       The influence function of the Horovitz-Thompson estimator can be inferred from the expansion \eqref{HTinfluence}: 
        \eqsplit[HT-IF]{
            \dot\ell^{*}(S,Y,w;h,\uppi) &= \frac{S}{w}(Y-\epsilon), 
         }
        where $\epsilon=\int p\uppi(p)dp$. We know that this estimator is an unbiased estimator of $\int\int p \uppi(p\mid w)h(w)dpdw$ under any distribution in  $\scp$. To prove that it is the influence function of the semiparametric efficient estimator at $\scp_0$ we should show that it can be obtained as a derivative of the log-likelihood at $\scp_0$. Thus we consider a parametric sub-model $\uppi_t(p\mid w)=\uppi(p)+ta(p,w)$, $|t|<\zeta$, where $\int a(p,w)dp\equiv 0$. (Formally, we need $a_t(p,w)$ with $\lim_{t\to 0}a_t\to a$ and $\uppi+t a_t>0,$ but this needs only a standard technical modification.) The derivative is
         \eqsplit{
            \dot\ell(S,Y,w;h,\uppi) &= \frac{SY}{\epsilon}\int p a(p,w)dp + \frac{S(1-Y)}{1-\epsilon}\int(1-p)a(p,w)dp
            \\
            &= \frac{SY}{\epsilon}\int p a(p,w)dp - \frac{S(1-Y)}{1-\epsilon}\int p a(p,w)dp
            \\
            &=\frac{\int pa(p,w)dp}{\epsilon(1-\epsilon)}S(Y-\epsilon),   
          }
          which is the influence function \eqref{HT-IF} for example if 
           \eqsplit{
            a(p,w) &= \frac{\epsilon(1-\epsilon)}{w\int (p-\epsilon)^2\uppi(p)dp}(p-\epsilon)\uppi(p). 
            }

  The estimator of the S-Bayesian depends only on the prior and on the likelihood. In particular it does not depend on the design, i.e., it does not depend on which $Y_i$s are observed, and certainly not on the design mechanism, $w_1,\dots,w_n$. Formally, since the $w_i$s are a priori non-informative independent of $p_1,\dots,p_n$, and the under the model, $Y_i$ is independent of $S_i$, the a posteriori does not depend on $w_1,\dots,w_n$, and asymptotically the Bayes estimator is going to be approximately the mean of the observed $Y_i$s (see the argument for the full observed variables), or equivalent to     
   \eqsplit{
    \hat p_b &= \frac{\sum_{i=1}^{n}S_iY_i}{\sum_{i=1}^{n}S_i}. 
    }
    We have then:
     \eqsplit{
        \sqrt{n\bar w}\bigl(\hat p_b - \bar p\bigr) &= \frac{\sqrt{\frac{\bar w}{ n} }\sum_{i=1}^{n}S_i(Y_i-\bar p)}{\frac1n \sum_{i=1}^{n}S_i}
        \\
        &=  \sqrt{\frac{\bar w}{ n} }\sum_{i=1}^{n}\frac{S_i}{\bar w}(Y_i-\bar p) + R_n
        \\
        &= \sqrt{\frac{\bar w}{ n} }\sum_{i=1}^{n}\frac{S_i}{\bar w}(Y_i-p_i) + \sqrt{\frac{\bar w}{ n} }\sum_{i=1}^{n}\frac{S_i}{\bar w}(p_i-\bar p_w) +\sqrt{\frac{\bar w}{ n} }\sum_{i=1}^{n}\frac{S_i}{\bar w}(\bar p_w-\bar p) + R_n 
        \\      
      &= \sqrt{\frac{\bar w}{ n} }\sum_{i=1}^{n}\frac{S_i}{\bar w}(Y_i-p_i) +\sqrt{\frac{\bar w}{ n} }\sum_{i=1}^{n}\frac{S_i-w_i}{\bar w}(p_i-\bar p_w) + \sqrt{n\bar w}(\bar p_w-\bar p) + R_n' 
      }
   where $R_n,R_n'\cip 0$.  The estimator is centered around the weighted mean of the  $p_i$s: $\bar p_w=\sum_{i=1}^{n}w_ip_i/\sum_{i=1}^{n}w_i$.   The first two terms are asymptotically normal with mean zero and variance (on the $\sqrt{n\bar w}$ scale):
   \eqsplit{
    V_b &=  \frac1{n} \sum_{i=1}^{n} \frac{w_i}{\bar w}p_i(1-p_i) + \frac1{n}\sum_{i=1}^{n}\frac{w_i(1-w_i)}{\bar w}(p_i-p_w)^2, 
    }
    which is somewhat smaller than $V_{HT}$.   The frequentist was ready to pay this increase in asymptotic variance to avoid the third term which is approximately equal to 
     \eqsplit{
        e_b &= \sqrt{n\bar w}(\bar p_w - \bar p)
        \\
        &= \sqrt{\frac{\bar w}{ n} }\sum_{i=1}^{n} \bigl(\frac{w_i}{\bar w}-1\bigl)(p_i-\bar p).  
      }
     By the a priori assumption that $w_i$ and $p_i$ are independent random variables, hence $e_b$ has mean zero and and its variance  is of $\O({{\var(w)\var(p)}})$,  compensating the smaller variance of the two terms. However, the situation is much worse if there is a small deviation from the a priori assumption.  
     
    The Bayesian could go to the extreme, as suggest by Ghosh, of assuming nothing on the dependency of $p_i$ on $w_i$. Suppose that the sampling probability get only a finite number of different values, $\omega_1,\dots,\omega_K$. This defines strata, and the extreme Bayesian consider a prior which enables the mean of $\{p_i: w_i=\omega_k\}$ to be essentially a free parameter, \cite{datta2025inverseprobabilityweightingsurvey}. As we already discussed, if  the $w_i$s do not vary, the estimator of the mean of the $p_i$s is asymptotically equivalent to the mean of $Y_i$s.    Applying this to any stratum with $w_i$ equal one of the $\omega_k$ we obtain the estimator
     \eqsplit[bayesWeights]{
        \hat p_{be} &=\frac1n \sum_{k=1}^{K}n_k \frac{\sum_{w_i=\omega_k}S_iY_i}{\sum_{w_i=\omega_k}S_i} \approx \frac1n\sum_{k=1}^{K}n_k \frac{\sum_{w_i=\omega_k}S_iY_i}{n_k\omega_k}=\frac1n\sum_{i=1}^{n} \frac{S_i}{w_i}Y_i,
      }
     where $n_k$ is the number of observations in the stratum $\{i:\;w_i=\omega_k\}$. We obtain that this Bayesian's estimator is equivalent to the Horovitz-Thompson estimator.
      
     Recall, we do not argue that it is impossible to find good F-Bayesian procedures. They do exist. In general, any good frequentist procedure has an equivalent good F-Bayesian procedure. Our question is whether they are expressing beliefs. Wheterh they are reasonable within the Bayesian methodology (or philosophy), not only within the Bayesian techniques or computation.  In the problem we consider, we accept that not knowing the exact prior distribution of the $p_i$s and putting a hierarchial prior on it is reasonable. However, assuming that the $p_i$s are not independent is not legitimate way out for a S-Bayesian.     
     
     Perhaps, there is a weak empirical correlation between $w$ and $p$. Probably the two are related to some \emph{unknown} latent variable. In that case, $\hat p_{HT}$ is asymptotically unbiased, while  $\hat p_b$ is asymptotically biased. It is still true that to the best of our knowledge, the two sequences are unrelated, but, as the Bayesian reasonably introduced the prior on the parameters of the the imagined a priori distribution of the $p_i$, he can introduced a potential weak correlation between the $p_i$ and $w_i$, as if the sampling weight $w_i$ carries some information (in some unknown way) about the success rate of $Y_i$. It would be legitimate to the Bayesian to modify, slightly, his prior after getting the information about the $w_i$. The a posteriori distribution would still be of a posteriori independent variables. Assuming a weak correlation may be reasonable for a honest Bayesian, assuming unrestricted correlation is not---it is not what he believes to be true. 
     
     However, under quadratic loss function, the Bayesian must use a plug-in estimator and estimate $n^{-1}\sum_{i=1}^{n}p_i$ by ideally $n^{-1}\sum_{i=1}^{n} \E(p_i\mid S_i, S_iY_i, w_i, \uppi_p)$. If $S_i=0$ this is just the prior mean. Consider now an observation with $S_i=1$. The unbiased estimator plug-in the estimator $\bar w Y_i/w_i$. Can a Bayes estimator do the same? Yes, if the distribution of  $p_i$  depends dramatically on $w_i$. A weak dependency which can be justified is not enough. There is no way a S-Bayesian implements correlation as strong as technically needed. To be more precise, consider the joint distribution of $(w,p,S,Y)$, where, $Y\in\{0,1\}$ and $P(Y=1\mid p, w,S)=p$. Then:
      \eqsplit{
        \E(p\mid Y=1,S=1, w) &= \E(p\mid Y=1, w) 
        \\
        &= \frac{\E(p^2\mid w)}{\E(p\mid w)} 
        \\
        \E(p\mid Y=0,S=1, w) &=\E(p\mid Y=0, w) 
        \\
        &= \frac{\E(p(1-p)\mid w)}{\E(1-p\mid w)}.
       }
       Hence
        \eqsplit{
            \E(p\mid Y=1,S=1, w)  - \E(p\mid Y=0,S=1, w)  &= \frac{\var(p\mid w)}{\E(p\mid w)\bigl(1-\E(p\mid w)\bigr)}. 
         }
     This, for the S-Bayesian, cannot vary as much as $\bar w/w_i$ if $w_i$ changes considerably while the dependency of $p$ on $w$ is ``weak.''  The reason that the Bayesian put the weights $1/w_i$ in \eqref{bayesWeights}, is that the estimates of the average of the $p_i$ on the sub-sample $\scs_{k1}=\{i: w_i=\omega_k,S_i=1\}$  is used as the estimate of $\E(p_i\mid w_i=\omega_k, S_i=0)$ on the sub-sample $\scs_{k0}=\{i: w_i=\omega_k,S_i=0\}$. For the small a  $w_i$ $\scs_{k0}$ is much larger than $\scs_{k1}$. But, this estimates is based on sample size of order $n_k\omega_k$, which of order $(n_k/n) (w_i/\bar w)$ smaller. Now, we consider the case where $K=K_n\to\infty$, $\omega_1>\omega_2>\dots>\omega_{K}$, and $\omega_{K}/\bar w\to 0$. In fact, we consider the non-stratified case where $w_i$ is drawn from a continuous distribution.   Thus, if the dependency of $p_i$ on $w_i$ is weak (of order $(n\bar w)^{-1/2}$), the estimate $\hat p_{HT}$ is actually closer to $\E(p_i\mid w_i=\omega_k, S_i=0)$ then the estimate  ${\sum_{w_i=\omega_k}S_iY_i}/{\sum_{w_i=\omega_k}S_i}$. Thus, the estimate $\hat p_{be}$ cannot represent the subjective prior of a S-Bayesian who takes seriously the claim that to the best of his knowledge the $p_i$s and $w_i$s are unrelated (or at most, weakly related in an unknown way).

     There are two crucial  differences between the semiparametric approach and the Bayesian. The first is that the semiparametric frequentist needs only a dependency on the $n^{-1/2}$ scale to justify her modification of the naive estimator and paying the price of increasing variance. The second, is that this is a minimax approach, which permits her to concentrate on a specific non-intuitive direction.

     Is it reasonable for the Bayesian to robustify his procedure? Is it rationale to use a prior that yields an approximation of the Horovitz-Thompson estimator? The answers to these questions may be affirmative. Adopting this position is consistent with Savage but  would deviate from the Bayesian paradigm of priors that represent beliefs. It is being Bayesian for convenience, as a technique for generating estimator, for computational convenience, or to fit a zeitgeist.     My conclusion is that even if the argument given by Savage is relevant, the alternatives should be weighted but not by anything that resembles a honest subjective probability.


\section{Proofs}\label{app:proofs}

\begin{proof}[Proof of Theorem \ref{th:consistency}] 

Consider the joint distribution $\bbp_n$ of  $(\beta_n,\ti\beta_n,\beta_n^{\uppi_n})$ under the Bayesian assumption $\beta_n\dist \uppi_n$.  

Since $\ti\beta_n$ is uniformly consistent. Then, for any $M_n\to\en$
 \eqsplit[npf1]{
    \bbp_n\bigl(\ell_n(\beta_n,\ti\beta_n)>M_n\bigr)\to 0.
  }
But $(\beta_n,\ti\beta_n)$ have the same distribution has $(\beta_n^{\uppi_n},\ti\beta_n)$, hence \eqref{npf1} implies
 \eqsplit[npf2]{
    \bbp_n\bigl(\ell_n(\beta_n^{\uppi_n},\ti\beta_n)>M_n\bigr)\to 0.
  }
The triangular inequality, \eqref{npf1}, and \eqref{npf2} imply  
 \eqsplit{
    \bbp_n\bigl(\ell_n(\beta_n^{\uppi_n},\beta_n)>M_n\bigr) &\le \bbp_n\bigl(\ell_n(\beta_n^{\uppi_n},\ti\beta_n)>M_n\bigr)+\bbp_n\bigl(\ell_n(\beta_n,\ti\beta_n)>M_n\bigr) \to 0.
  }
  
\end{proof}

\begin{proof}[Proof of Theorem \ref{th:densitySq}]

We consider $\scs$ defined by 
 \eqsplit{
    f(x) &= f_0(x) + \sum_{k=1}^{K}\frac{\xi_k}{K^{\al}}\frac{\pi}{2}\sin\bigl(\pi (Kx-k)\bigr)\ind(k-1<Kx\le k),
  }
where $f_0$  is a known density function, $K=K_n$, and $\xi_k\in\{-1,1\}$ are some unknown arbitrary permuted constants. All that is known about $\xi_1,\dots,\xi_K$ is that they are $\{-1,1\}$ and thus, necessarily since $f$ is a density, exactly half of them are $+1$. Since, nothing otherwise is known, the only subjective  prior is that all permutations of the $\xi_k$s are equally likely. Since the distribution of the observations within the bins are fully known, the only informative part in the data are the histogram counts $(N_1,\dots,N_k)$, which have a multinomial  distribution with probabilities $f_k\pm K^{-\al} $, where $f_k=\int_{(k-1)/K}^{k/K}f_0(x)dx$. The standard deviation of $N_k/n$ is of order $(nK)^{-1/2}$ while the difference in probabilities between $\xi_k=\pm 1$ is of order $K^{-(1+\al)}$. Thus the effect size is of order $\sqrt{n/K^{1+2\al}}\to 0$. 
 \eqsplit{
    \frac{N_k}{n}\dist \scb\Big(n,(f_K\pm K^{-\al})/K\Bigl)
  }
 Thus 
  \eqsplit{
    \sqrt{\frac{nK}{f_k}}\Bigl(\frac{N_k}{n}-\frac{f_k}{K}\pm \frac{1}{K^{1+\al}}\Bigr) \cid \scn(0,1).
   }
   
Apply Lemma \ref{lem:twopoints} to the approximate distribution of $N_k/n - f_k/K$:  
  \eqsplit{
    \uppi(\xi_k=1\mid N_k) &\approx \frac12 + \frac12\sqrt{\frac{n}{f_k K^{1+2\al}}}\Bigl(\frac{N_k}{n}-\frac{f_K}{K}\Bigr).
   }
Which, as expected, shrinks the contribution of the observation to almost nothing. Thus the S-Bayes estimator of $\th$ would be essentially that given by the prior:
 \eqsplit{
    \E \int f^2(x)dx &= \int f_0^2(x)dx + \frac{\pi}{8K^{2\al}}. 
  }
while   
 \eqsplit{
    \int f^2(x)dx &= \int f_0^2(x)dx + \frac{\pi}{8K^{2\al}} + 2\sum_{k=1}^{K}\frac{\xi_k}{K^{\al}}\int_{(k-1)/K}^{k/K}f_0(x)\frac{\pi}{2}\sin\bigl(\pi (Kx-k)\bigr)dx,
  }
  The integrals on the last term are converging to the density, and thus the maximum and minimum are achieved when the $z_k$ are maximally correlated with $f_0$, thus the range of the last term is 
   \eqsplit{
    \pm \frac{2}{K^{\al}}\inf_\eta\int|f_0(x)-\eta|dx.
    }
 Since the only limit on $K$ we have is $K^{1+2\al}\gg n$, then $K^{\al}=L_n n^{\al/(1+2\al)}$, where $L_n$ is any slowly growing function. This concludes the proof.   
\end{proof}

\immediate\write\lastexampfile{%
    \string\setcounter{lastexample}{\thenuexamples}}

\bibliography{onBayesianity1.bib}
\end{document}